\DeclarePairedDelimiter\paren{\lparen}{\rparen}
\newcommand{\fl}[1]{{(-\triangle)^{#1}}}
\newcommand{\dfl}[1]{{(-\triangle)_{\rmd}^{#1}}}
\newcommand{\calF}{\mathcal{F}}
\newcommand{\calH}{\mathcal{H}}
\newcommand{\calM}{\mathcal{M}}
\newcommand{\bbC}{\mathbb{C}}
\newcommand{\bbR}{\mathbb{R}}
\newcommand{\bbT}{\mathbb{T}}
\newcommand{\bbZ}{\mathbb{Z}}
\newcommand{\rme}{\mathrm{e}}
\newcommand{\rmd}{\mathrm{d}}
\newcommand{\rmO}{\mathrm{O}}
\newcommand{\rmi}{\mathrm{i}}
\newcommand{\olu}{\overline{u}}
\newcommand{\Md}{\mathcal{M}_\rmd}
\newcommand{\Hd}{\mathcal{H}_\rmd}
\newcommand{\bU}{\bm U}
\newcommand{\bV}{\bm V}
\newcommand{\bb}{\bm b}
\newcommand{\br}{\bm r}
\newcommand{\bp}{\bm p}
\newcommand{\bs}{\bm s}
\newcommand{\bt}{\bm t}
\newcommand{\bx}{\bm x}
\newcommand{\by}{\bm y}
\newcommand{\bv}{\bm v}
\DeclareMathOperator*{\mathspan}{span}
\DeclareMathOperator*{\diag}{diag}
\DeclareMathOperator*{\sech}{sech}
\theoremstyle{definition}
\newtheorem{remark}{Remark}
\newtheorem{theorem}{Theorem}
\newtheorem{lemma}{Lemma}
\newtheorem{proposition}{Proposition}
\author{Yuto~Miyatake, Tai~Nakagawa, Tomohiro~Sogabe, Shao-Liang~Zhang}
\title[A Linearly Implicit scheme for the fractional nonlinear Schr\"odinger equation]{A structure-preserving Fourier pseudo-spectral linearly implicit scheme for the space-fractional nonlinear Schr\"odinger equation}
\begin{document}
\maketitle

\begin{abstract}
We propose a Fourier pseudo-spectral scheme
  for the space-fractional nonlinear Schr\"odinger equation.
  The proposed scheme has the following features:
  it is linearly implicit,
  it preserves two invariants of the equation,
  its unique solvability is guaranteed without
  any restrictions on space and time step sizes.
  The scheme requires solving a complex symmetric linear system per time step. To solve the system efficiently, we also present a certain variable transformation and preconditioner.
\end{abstract}

\section{Introduction}
\label{sec1}

The Schr\"odinger equation, a fundamental equation in quantum mechanics,
can be derived by
using Feynman path integrals over Brownian trajectories~\cite{fh65}.
Around 2000,
Laskin coined the fractional Schr\"odinger equation by
replacing the Feynman path integrals by the L\'evy ones~\cite{la00b,la02,la00}.
For example, the
space-fractional nonlinear Schr\"odinger (FNLS) equation with the cubic
nonlinearity is formulated as
\begin{align} 
u_t =-\rmi \fl{\frac{\alpha}{2}} u + \rmi |u|^2 u, \quad  t>0 ,
\end{align}
where $\rmi = \sqrt{-1}$, the subscript $t$ denotes the differentiation with respect to time variable $t$, and
$\fl{\frac{\alpha}{2}}$ denotes the fractional Laplacian
with the L\'evy index $1<\alpha \leq 2$.
In one-dimensional cases, the fractional Laplacian is defined by
\begin{align}
\fl{\frac{\alpha}{2}} f(x) = \calF^{-1} \paren*{|\xi|^\alpha \hat{f}(\xi)}
\end{align}
with the Fourier transform $\hat{f}(\xi) = \calF [f] (\xi)$.
The fractional Laplacian is  a nonlocal operator in general
except for the standard Laplacian $\alpha=2$.

The fractional Schr\"odinger equation has found several applications in physics~\cite{gx06,kl13,lo15,zl15}.
Besides, from mathematical viewpoints, several properties such as the well-posedness
have been established~\cite{ch15,gh08,gh11}.
Further, there has been a growing interest
in constructing reliable and efficient numerical schemes.
For the conventional NLS equation, it has been proved and is now well-known that
structure-preserving schemes such as symplectic integrators, multi-symplectic integrators and invariants-preserving integrators, have substantial advantages
over general-purpose methods such as explicit Runge--Kutta-type methods with
 standard finite difference spatial discretization:
structure-preserving schemes often produce qualitatively better numerical solutions
over a long-time interval with a relatively large time step size~(see, e.g.~\cite{dfp81,fa12}).
Therefore, it is no wonder that
recent papers on the FNLS equation have mainly focused on the construction of structure-preserving schemes.
For example,
based on a Hamiltonian or generalized multi-symplectic structure,
symplectic or multi-symplectic schemes were presented in~\cite{wh18}.
The fractional Schr\"odinger equation has mass (probability) and energy conservation laws, and
several invariant(s)-preserving schemes have been developed: schemes preserving the mass~\cite{lg18,wh15b,wh16}
and schemes preserving both the mass and energy~\cite{dz16,lh19,lh17,wh15,wh16c}.
Most of these schemes are generalizations of existing schemes developed for the case $\alpha = 2$.
For example, the scheme \cite{lh19} can be seen as a generalization of the so-called relaxation scheme~\cite{be04} (see~\cite{bd18} for the theoretical analysis).

In this paper, we are concerned with invariants-preserving numerical schemes.
Most of the aforementioned numerical schemes preserving both invariants are nonlinear,
and thus computationally expensive.
Therefore, it is hoped that those schemes are linearized
without deteriorating the mass and energy conservations.
However, there are two challenges for this aim.
First, standard approaches to the linearization of nonlinear schemes often lose conservation properties.
In fact, linearly implicit schemes derived in~\cite{lg18,wh15b} only preserve the mass.
Second, even if we are succeeded in designing intended linearly implicit schemes,
they result in solving dense linear systems due to the fractional Laplacian, which is nonlocal.
Although some of the papers mentioned above contribute to the treatment of the
fractional Laplacian in the discrete settings, no matter how we discretize the
fractional Laplacian a matrix representing a discrete fractional Laplacian
is a dense matrix.
Thus, the computation of such dense linear systems is an important issue.
In fact,
the computational complexity of direct solvers applied to dense linear systems is of $\rmO(N^3)$,
where $N$ is the size of the system.
Even if we consider iterative solvers, each iteration usually requires
$\rmO(N^2)$ operations.

With these backgrounds,
we aim to derive a linearly implicit numerical scheme preserving discrete mass and energy simultaneously,
and then discuss the implementation issues for the proposed scheme.
In our problem setting,
imposing the periodic boundary condition, we mainly focus on the FNLS equation of the form~\cite{gh08}
\begin{align} \label{eq:fnls}
u_t =-\rmi \fl{\frac{\alpha}{2}} u + \rmi |u|^2 u, \quad x\in\bbT, \ t>0
\end{align}
with the initial condition $u(0,x) = u_0(x)$,
where
$\bbT = \bbR/ L \bbZ$ denotes the one-dimensional torus of length $L$.
The fractional Laplacian $\fl{\frac{\alpha}{2}}$ is now defined by
\begin{align} \label{eq:fl}
\fl{\frac{\alpha}{2}} u = \sum_{k\in\bbZ} |\mu k|^{\alpha}\hat{u}_k \rme^{\rmi\mu kx},
\end{align}
where $\hat{u}_k$ denotes the Fourier coefficients
\begin{align}
u = \sum_{k\in\bbZ} \hat{u}_k \rme^{\rmi\mu kx}, \quad
\hat{u}_k = \frac{1}{L} \int_{\bbT} u (x) \rme^{-\rmi \mu kx}\,\rmd x
\end{align}
and $\mu = 2\pi / L$.
The mass (probability) and energy for  the FNLS equation~\eqref{eq:fnls} are defined by
\begin{align}
\calM[u] &= \int_{\bbT} |u|^2 \,\rmd x, \label{eq:mass} \\
\calH[u] &= \int_{\bbT}
\paren*{-\left|\fl{\frac{\alpha}{4}}u \right|^2 + \frac{|u|^4}{2}}\,\rmd x, \label{eq:energy1}
\end{align}
respectively, and these quantities are constant along the solution~\cite{gh08}.

We employ the pseudo-spectral method
for the space discretization. In this approach, the treatment of the fractional Laplacian is rather straightforward,
as will be explained in Section~\ref{sec22}.
Time discretization of the proposed scheme is the same as the one proposed for the
coupled FNLS equation~\cite{wx14}.
However, our derivation is a bit more systematic,
and to illustrate this we also show that invariants-preserving linearly implicit schemes
can be constructed for the FNLS equation with stronger nonlinearity.
We note that our approach is motivated by the discrete variational derivative method~\cite{do11,fm11}.

When we implement the proposed scheme, we need to choose a linear solver carefully.
Krylov subspace methods seem suitable because
the multiplication of a vector by the coefficient matrix 
can be efficiently computed with $\rmO(N\log N)$ operations
thanks to the fast Fourier transform.
The coefficient matrix of the linear system appearing in the scheme is found to be complex and symmetric (thus, non-Hermitian), and varies as time passes.
For non-Hermitian systems, the Bi-CGSTAB method~\cite{va92} is a standard choice, but this method requires two matrix-vector
products per iteration.
As alternative choices, we test the conjugate orthogonal conjugate gradient (COCG) method and conjugate orthogonal conjugate residual (COCR) method,
which were specially designed for solving  complex symmetric linear systems~\cite{sz07,vm90} and require only a single matrix-vector
product per iteration.
We will observe that these methods work well
when a relatively small time step size is used. 
But since one of the advantages of structure-preserving schemes 
is that they can give qualitatively correct numerical solutions
with relatively large time step sizes, 
the convergence of these methods for the case
the time step size is not small enough is of interest.
Unfortunately, however, our preliminary experiments  show that
if the time step size is relatively large, the coefficient matrix tends to be ill-conditioned, and the number of matrix-vector
products required to reach convergence tends to become large even if the COCG or COCR method is employed
(although the results remain better than those by the Bi-CGSTAB method).
To address such situations, we consider preconditioning issues. 
We propose a certain preconditioner, which can be incorporated into the Bi-CGSTAB method.
Numerical experiments show that the number of iterations to reach convergence for the preconditioned Bi-CGSTAB method is almost independent of the size of the linear system.

The paper is organized as follows.
In Section~\ref{sec2}, our notation and several preliminary results are summarized.
In Section~\ref{sec3}, our intended scheme is presented, and its properties are
discussed. The preconditioning issues are also addressed.
Several numerical studies are conducted in Section~\ref{sec4},
and finally
concluding remarks are given in Section~\ref{sec5}.

\section{Preliminaries}
\label{sec2}

In this section, our notation and several preliminary results are summarized.

\subsection{Mass and energy preservations for the FNLS equation}

The mass and energy conservation laws follow in a straightforward way,
but we here sketch the proof since it will be mimicked in the discrete settings in Section~\ref{sec3}.

\begin{theorem}[e.g.~\cite{gh08}]
For the solution to the FNLS equation \eqref{eq:fnls}, it follows that
\begin{align}
\frac{\rmd}{\rmd t} \calM[u] = 0, \quad
\frac{\rmd}{\rmd t} \calH[u] = 0.
\end{align}
\end{theorem}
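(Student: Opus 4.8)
The plan is to differentiate each invariant under the integral sign and use integration by parts (which, on the torus, has no boundary terms) together with the self-adjointness of the fractional Laplacian with respect to the real $L^2$ pairing. Throughout I will use the identity $\frac{\rmd}{\rmd t}|u|^2 = 2\,\mathrm{Re}\,(\olu\, u_t)$ and, more generally, that for any real-valued functional the time derivative reduces to twice the real part of a pairing of $\olu$ (or a suitable variational derivative) with $u_t$.

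For the mass, I would compute
\begin{align}
\frac{\rmd}{\rmd t}\calM[u] = \int_{\bbT} \paren*{\olu\, u_t + u\, \overline{u_t}}\,\rmd x = 2\,\mathrm{Re}\int_{\bbT} \olu\, u_t \,\rmd x,
\end{align}
then substitute the equation \eqref{eq:fnls} for $u_t$. This gives $2\,\mathrm{Re}\paren*{-\rmi \int_{\bbT} \olu\, \fl{\alpha/2}u\,\rmd x + \rmi \int_{\bbT} |u|^4\,\rmd x}$. The second term is $\rmi$ times a real number, hence has zero real part. For the first term, I would use \eqref{eq:fl} together with Parseval to write $\int_{\bbT}\olu\,\fl{\alpha/2}u\,\rmd x = L\sum_{k}|\mu k|^\alpha |\hat u_k|^2$, which is real, so again the prefactor $-\rmi$ kills it. Hence $\frac{\rmd}{\rmd t}\calM[u]=0$.

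For the energy, I would split $\calH[u] = -\int_{\bbT}|\fl{\alpha/4}u|^2\,\rmd x + \frac12\int_{\bbT}|u|^4\,\rmd x$. Using Parseval as above, $\int_{\bbT}|\fl{\alpha/4}u|^2\,\rmd x = L\sum_k |\mu k|^\alpha|\hat u_k|^2 = \int_{\bbT}\olu\,\fl{\alpha/2}u\,\rmd x$, so differentiating in time and using that $\fl{\alpha/2}$ commutes with $\partial_t$ and is symmetric yields $\frac{\rmd}{\rmd t}\int_{\bbT}|\fl{\alpha/4}u|^2\,\rmd x = 2\,\mathrm{Re}\int_{\bbT}\overline{\fl{\alpha/2}u}\,u_t\,\rmd x$. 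Similarly $\frac{\rmd}{\rmd t}\,\frac12\int_{\bbT}|u|^4\,\rmd x = 2\,\mathrm{Re}\int_{\bbT}|u|^2\olu\,u_t\,\rmd x$. Combining, $\frac{\rmd}{\rmd t}\calH[u] = -2\,\mathrm{Re}\int_{\bbT}\overline{\paren*{\fl{\alpha/2}u - |u|^2 u}}\,u_t\,\rmd x = -2\,\mathrm{Re}\int_{\bbT}\overline{(\rmi u_t)}\,u_t\,\rmd x$, where in the last step I substituted $\fl{\alpha/2}u - |u|^2 u = \rmi u_t$ from \eqref{eq:fnls}. Since $\overline{(\rmi u_t)}\,u_t = -\rmi|u_t|^2$ is purely imaginary, its real part integrates to zero, giving $\frac{\rmd}{\rmd t}\calH[u]=0$.

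The only technical point worth care — and the main (mild) obstacle — is justifying that $\fl{\alpha/2}$ is symmetric with respect to the real bilinear form $\mathrm{Re}\int_{\bbT}\overline{(\cdot)}(\cdot)\,\rmd x$ and that differentiation under the integral/series is legitimate; both follow immediately from the spectral definition \eqref{eq:fl} and Parseval's identity, assuming the solution is regular enough (e.g. $u(t,\cdot)$ in an appropriate fractional Sobolev space so the relevant series converge), which is the standing hypothesis in the well-posedness references cited. Everything else is the routine "$\mathrm{Re}$ of $\rmi$ times a real quantity is zero" mechanism, applied twice.
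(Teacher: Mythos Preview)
Your proof is correct and follows essentially the same route as the paper: differentiate under the integral, substitute the equation, and exploit the self-adjointness of the fractional Laplacian so that everything reduces to the real part of a purely imaginary quantity. The only cosmetic difference is that the paper writes the conjugate pair $u_t\olu + u\olu_t$ out in full and invokes the integration-by-parts identity $\int_{\bbT}(\fl{\alpha/2}u)\olu\,\rmd x = \int_{\bbT}(\fl{\alpha/4}u)(\fl{\alpha/4}\olu)\,\rmd x$ explicitly (because this is the formula they later mimic in the discrete summation-by-parts lemma), whereas you reach the same conclusion via Parseval and the ``$2\,\mathrm{Re}$'' shortcut.
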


\begin{proof}
First, we prove the mass preservation.
\begin{align*}
\frac{\rmd}{\rmd t}\calM[u]
&= \int_{\bbT} \paren*{u_t \olu + u\olu_t}\,\rmd x
= \int_{\bbT} \left[ \paren*{-\rmi \fl{\frac{\alpha}{2}}u + \rmi |u|^2 u}\olu
+ u \paren*{\rmi \fl{\frac{\alpha}{2}}\olu - \rmi |u|^2 \olu}\right]\,\rmd x \\
&= \rmi \int_{\bbT} \left[ -\paren*{ \fl{\frac{\alpha}{2}} u}\olu
+  u \fl{\frac{\alpha}{2}} \olu\right] \,\rmd x \\
&= \rmi \int_{\bbT} \left[-\paren*{\fl{\frac{\alpha}{4}}u}\paren*{\fl{\frac{\alpha}{4}}\olu}
+ \paren*{\fl{\frac{\alpha}{4}}u}\paren*{\fl{\frac{\alpha}{4}}\olu}\right] \, \rmd x
= 0.
\end{align*}
The first equality is just the chain rule.
The fourth equality is due to the integration-by-parts formula:
for any $L$-periodic complex-valued functions $u$ and $v$, we have
\begin{align}
\int_{\bbT} \paren*{\fl{\frac{\alpha}{2}} u } \overline{v} \,\rmd x
&= \int_{\bbT} \paren*{\fl{\frac{\alpha}{4}} u }  \paren*{\fl{\frac{\alpha}{4}} \overline{v} }  \,\rmd x . \label{ibp1}
\end{align}
Next, we prove the energy preservation.
\begin{align}
\frac{\rmd}{\rmd t} \calH[u]
&=
\int_{\bbT}
\left[
-\paren*{\fl{\frac{\alpha}{4}}u_t}  \paren*{\fl{\frac{\alpha}{4}}\olu}
-\paren*{\fl{\frac{\alpha}{4}}u}  \paren*{\fl{\frac{\alpha}{4}}\olu_t}
+ u_t |u|^2 \olu + |u|^2 u \olu_t
\right] \,\rmd x \\
&=
\int_{\bbT}
\left[
-u_t \paren*{\fl{\frac{\alpha}{2}} \olu - |u|^2 \olu}
-\paren*{\fl{\frac{\alpha}{2}} u - |u|^2 u} \olu_t
\right]\,\rmd x \\
&=
\int_{\bbT}
\left[
-u_t (-\rmi \olu_t) - (\rmi u_t) \olu_t
\right]\,\rmd x
=
0.
\end{align}
\end{proof}

\subsection{Discrete settings}
\label{sec22}

\subsubsection{Notation}

The period $L$ is divided by $N$ equal grids, which means $\Delta x = L/N$.
We denote the numerical solution for $U(n\Delta t, k\Delta x)$
by $U_k^{(n)}$.
We often write the solutions as a vector $\bU^{(n)} = (U_0^{(n)}, U_1^{(n)},\dots, U_{N-1}^{(n)})^\top$.
To treat the periodic boundary condition,
we consider $\{ U_k \}_{k\in\bbZ}$, an infinitely long vector,
and then its $N$-dimensional restriction by the discrete periodic boundary condition:
$U_k^{(n)} = U_{k \ \text{mod} \ N}^{(n)}$ for all $k \in \bbZ$.
The space to which such periodic vectors belong is denoted by
$X_\rmd = \{ \bU = \{ U_k \}_{k\in\bbZ} \ | \ U_k \in \bbC, \ U_k = U_{k \ \text{mod} \ N} \text{, for all }k\in\bbZ \}$.

\subsubsection{Discrete fractional Laplacian}

We define a discrete fractional Laplacian and show its properties.
For simplicity, we assume that $N$ is an odd number
keeping in mind that the following discussion can be extended to an even number $N$
straightforwardly.
We employ the Fourier pseudo-spectral approach for the space discretization.

We define a function space $S_N$ by
\begin{align}
S_N = \mathspan \{g_j (x), \ j = 0,1,\dots,N-1  \},
\end{align}
where $g_j (x)$ is a trigonometric polynomial defined by
\begin{align}
g_j (x) = \frac{1}{N} \sum_{p = -\frac{N-1}{2}}^{\frac{N-1}{2}} \rme ^{\rmi \mu p (x-x_j)},
\quad \mu = 2\pi/L.
\end{align}
Note that $g_j(x_l) = \delta _{j,l}$, where  $\delta _{j,l}$ denotes the Kronecker delta.
We define the interpolation operator $I_N: L^2 (\bbT) \to S_N$ by
\begin{align} \label{eq:itlf}
I_N u(x) = \sum_{j=0}^{N-1} u_j g_j (x)
\end{align}
so that $I_N u(x_j) = u_j$,
where $u_j = u(x_j)$ and $L^2 (\bbT)$ denotes the set of square integrable functions on $\bbT$.
Applying the fractional Laplacian $\fl{\frac{\alpha}{2}}$
to the interpolated function \eqref{eq:itlf} yields
\begin{align}
\fl{\frac{\alpha}{2}} I_N u(x)
=
\frac{1}{N}
\sum_{j=0}^{N-1} u_j  \sum_{p = -\frac{N-1}{2}}^{\frac{N-1}{2}} |\mu p|^\alpha \rme ^{\rmi \mu p (x-x_j)},
\end{align}
and thus
\begin{align*}
\fl{\frac{\alpha}{2}} I_N u(x_k)
&
=\frac{1}{\sqrt{N}} \sum_{p=0}^{N-1} d_p
\paren*{\frac{1}{\sqrt{N}} \sum_{j=0}^{N-1} u_j \rme ^{-\frac{2\pi \rmi jp}{N}}}
\rme^{\frac{2\pi\rmi pk}{N}},
\end{align*}
where
\begin{align*}
d_p = \begin{cases}
|\mu p|^\alpha, & 0\leq p \leq \frac{N-1}{2},\\
|\mu (p-N)|^\alpha, & \frac{N+1}{2} \leq p \leq N-1.
\end{cases}
\end{align*}
We then define a discrete fractional Laplacian $\dfl{\frac{\alpha}{2}}$ by
\begin{align*}
\paren*{\dfl{\frac{\alpha}{2}} \bU} _ k =
\frac{1}{\sqrt{N}} \sum_{p=0}^{N-1} d_p
\paren*{\frac{1}{\sqrt{N}} \sum_{j=0}^{N-1} U_j \rme ^{-\frac{2\pi \rmi jp}{N}}}
\rme^{\frac{2\pi\rmi pk}{N}}.
\end{align*}
By using the notation of the discrete Fourier transform and its inverse:
\begin{align}
(\calF_\rmd \bU )_k = \frac{1}{\sqrt{N}} \sum_{j=0}^{N-1}U_j \rme ^{-\frac{2\pi \rmi jk}{N}},
\quad
(\calF_\rmd^{-1} \hat{\bU} )_j = \frac{1}{\sqrt{N}} \sum_{k=0}^{N-1} \hat{U}_k
\rme ^{\frac{2\pi \rmi jk}{N}},
\end{align}
the discrete fractional Laplacian can also be expressed as
\begin{align}
\dfl{\frac{\alpha}{2}} \bU = \calF_\rmd^{-1} D_\alpha \calF_\rmd \bU,
\end{align}
where
\begin{align}\label{def:da}
D_\alpha =
\begin{bmatrix}
0 &  &  &  &  &  &  \\
   & |\mu \cdot 1|^\alpha &  & &  &  &  \\
   &  & \ddots &  &  &  &  \\
   &  &  & \left|\mu  \paren*{\frac{N-1}{2}}\right|^\alpha &  &  &  \\
   &  &  &  & \left|\mu \paren*{\frac{N-1}{2}}\right|^\alpha & &  \\
   &  &  &  &  & \ddots &  \\
   &  &  &  &  &  & |\mu \cdot 1|^\alpha
\end{bmatrix}.
\end{align}
We shall abuse notation and write $\dfl{\frac{\alpha}{2}} U_k $ instead of $\paren*{\dfl{\frac{\alpha}{2}} \bU }_k$.

The next lemma shows that the discrete fractional Laplacian $\dfl{\frac{\alpha}{2}}$
is a Hermitian operator.
This property will be used to prove the unique solvability of our proposed scheme.

\begin{lemma}
\label{lem:her}
The discrete fractional Laplacian $\dfl{\frac{\alpha}{2}}$ is Hermitian, i.e.
self-adjoint:
\begin{align}
\paren*{\dfl{\frac{\alpha}{2}}}^\dagger = \dfl{\frac{\alpha}{2}},
\end{align}
where the symbol $\dagger$ denotes the Hermitian adjoint.
\end{lemma}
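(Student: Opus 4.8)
The plan is to exploit the factorization $\dfl{\frac{\alpha}{2}} = \calF_\rmd^{-1} D_\alpha \calF_\rmd$ derived just above the lemma and to reduce the claim to two elementary facts: that the normalized discrete Fourier transform $\calF_\rmd$ is unitary (so that its inverse coincides with its Hermitian adjoint), and that $D_\alpha$ in \eqref{def:da} is a real diagonal matrix and hence Hermitian. Once the notion of adjoint is pinned down by fixing the standard discrete inner product on $X_\rmd$ (say $\ip{\bU}{\bV} = \Delta x\sum_{k=0}^{N-1} U_k \overline{V_k}$; the constant prefactor plays no role in what follows), the lemma becomes a one-line computation.

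First I would record that $\calF_\rmd^\dagger = \calF_\rmd^{-1}$. Reading off the matrix entries, $(\calF_\rmd)_{kj} = \tfrac{1}{\sqrt N}\rme^{-2\pi\rmi jk/N}$, so $(\calF_\rmd^\dagger)_{jk} = \overline{(\calF_\rmd)_{kj}} = \tfrac{1}{\sqrt N}\rme^{2\pi\rmi jk/N}$, which is exactly the matrix of $\calF_\rmd^{-1}$ as defined in the excerpt. That this $\calF_\rmd^{-1}$ really is a two-sided inverse is the discrete orthogonality relation $\tfrac1N\sum_{j=0}^{N-1}\rme^{2\pi\rmi j(k-l)/N} = \delta_{k,l}$, a finite geometric sum; equivalently this is the discrete Parseval identity $\ip{\calF_\rmd \bU}{\calF_\rmd \bV} = \ip{\bU}{\bV}$.

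Second, $D_\alpha$ has only real numbers on its diagonal, hence $D_\alpha^\dagger = D_\alpha$. Combining these with the rule $(ABC)^\dagger = C^\dagger B^\dagger A^\dagger$, and using $\calF_\rmd^\dagger = \calF_\rmd^{-1}$ together with $\paren*{\calF_\rmd^{-1}}^\dagger = \paren*{\calF_\rmd^\dagger}^\dagger = \calF_\rmd$, one gets
\begin{align}
\paren*{\dfl{\frac{\alpha}{2}}}^\dagger = \paren*{\calF_\rmd^{-1} D_\alpha \calF_\rmd}^\dagger = \calF_\rmd^\dagger\, D_\alpha^\dagger\, \paren*{\calF_\rmd^{-1}}^\dagger = \calF_\rmd^{-1} D_\alpha \calF_\rmd = \dfl{\frac{\alpha}{2}}.
\end{align}

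There is essentially no hard step here; the only points deserving a line of care are the choice of inner product (so that ``adjoint'' is unambiguous on $X_\rmd$) and the invertibility of $\calF_\rmd$, i.e. the orthogonality of the discrete characters. As an alternative that avoids even mentioning $\calF_\rmd$, one could compute the matrix of $\dfl{\frac{\alpha}{2}}$ directly: its $(k,j)$ entry equals $\tfrac1N\sum_{p=0}^{N-1} d_p\,\rme^{2\pi\rmi p(k-j)/N}$, which depends only on $k-j$; since the weights satisfy $d_p = d_{N-p}$, the substitution $p\mapsto N-p$ shows this entry is both real and invariant under exchanging $k$ and $j$, so the matrix is in fact real symmetric and \emph{a fortiori} Hermitian.
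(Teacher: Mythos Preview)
Your proof is correct and follows essentially the same route as the paper: use the factorization $\dfl{\frac{\alpha}{2}} = \calF_\rmd^{-1} D_\alpha \calF_\rmd$, note that $D_\alpha$ is real diagonal (hence Hermitian) and that $\calF_\rmd$ is unitary (hence $\calF_\rmd^{-1} = \calF_\rmd^\dagger$), and then apply $(ABC)^\dagger = C^\dagger B^\dagger A^\dagger$. Your alternative computation showing the matrix is real symmetric is a nice bonus that anticipates the paper's Remark~\ref{rm:real}.
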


\begin{proof}
The operator $D_\alpha$ defined by \eqref{def:da} is
Hermitian ($D_\alpha^\dagger = D_\alpha$)
since it is a real diagonal matrix.
Due to the unitarity of the discrete Fourier transform $\calF_\rmd$: $\calF_\rmd^{-1} = \calF_\rmd^\dagger $,
we have
$\paren*{\calF_\rmd^{-1}}^\dagger = \paren*{\calF_\rmd^\dagger}^\dagger = \calF_\rmd$.
Therefore, it follows that
\begin{align}
\paren*{\dfl{\frac{\alpha}{2}}}^\dagger
= \paren*{\calF_\rmd^{-1}D_\alpha \calF_\rmd}^\dagger
= \calF_\rmd^\dagger D_\alpha^\dagger \paren*{\calF_\rmd^{-1}}^\dagger
=\calF_\rmd^{-1} D_\alpha \calF_\rmd
= \dfl{\frac{\alpha}{2}}.
\end{align}
\end{proof}

Note that $\dfl{\frac{\alpha}{4}}$ is also Hermitian, and $\dfl{\frac{\alpha}{2}} = \dfl{\frac{\alpha}{4}} \circ \dfl{\frac{\alpha}{4}}$.
Thus, as an immediate consequence of Lemma~\ref{lem:her}, we have the
following summation-by-parts formula
that corresponds to the integration-by-parts formula \eqref{ibp1}.

\begin{lemma}
For any two vectors $\bU, \bV \in X_\rmd$, we have
\begin{align} \label{sbpf}
\Delta x \sum_{k=0}^{N-1}
\paren*{\dfl{\frac{\alpha}{2}} U_k } \overline{V}_k
=
\Delta x \sum_{k=0}^{N-1}
\paren*{\dfl{\frac{\alpha}{4}} U_k }
\paren*{\dfl{\frac{\alpha}{4}} \overline{V}_k }.
\end{align}
\end{lemma}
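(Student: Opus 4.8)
The plan is to read the left-hand side of \eqref{sbpf} as a Hermitian inner product on $X_\rmd$ and then transfer half of the discrete fractional Laplacian onto the second factor using self-adjointness. Concretely, equip $X_\rmd$ with the inner product $\langle \bU, \bV\rangle_\rmd = \Delta x\sum_{k=0}^{N-1} U_k \overline{V}_k$; with respect to this pairing, Lemma~\ref{lem:her} is exactly the statement that $\dfl{\frac{\alpha}{2}}$ is self-adjoint, and the same argument applied to the Fourier multiplier with symbol $|\mu p|^{\alpha/2}$ (which is again a real diagonal matrix conjugated by the unitary $\calF_\rmd$) shows $\dfl{\frac{\alpha}{4}}$ is self-adjoint as well. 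I will also use the factorization $\dfl{\frac{\alpha}{2}} = \dfl{\frac{\alpha}{4}}\circ\dfl{\frac{\alpha}{4}}$, which holds because the diagonal symbols multiply entrywise, $|\mu p|^{\alpha} = \bigl(|\mu p|^{\alpha/2}\bigr)^{2}$.

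With these ingredients the computation is short. First I would rewrite the left-hand side as $\Delta x\sum_{k=0}^{N-1}\paren*{\dfl{\frac{\alpha}{2}}U_k}\overline{V}_k = \langle \dfl{\frac{\alpha}{2}}\bU, \bV\rangle_\rmd$, then insert the factorization to obtain $\langle \dfl{\frac{\alpha}{4}}(\dfl{\frac{\alpha}{4}}\bU), \bV\rangle_\rmd$, and finally apply self-adjointness of $\dfl{\frac{\alpha}{4}}$ to move one copy across, giving $\langle \dfl{\frac{\alpha}{4}}\bU, \dfl{\frac{\alpha}{4}}\bV\rangle_\rmd = \Delta x\sum_{k=0}^{N-1}\paren*{\dfl{\frac{\alpha}{4}}U_k}\,\overline{\dfl{\frac{\alpha}{4}}V_k}$.

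The one point that requires a word of care is matching this with the right-hand side of \eqref{sbpf}, i.e.\ verifying $\overline{\dfl{\frac{\alpha}{4}}V_k} = \dfl{\frac{\alpha}{4}}\overline{V}_k$, equivalently that $\dfl{\frac{\alpha}{4}}$ commutes with complex conjugation. This holds because its matrix representation $\calF_\rmd^{-1}D_{\alpha/2}\calF_\rmd$ is in fact a real matrix: $\overline{\calF_\rmd} = \calF_\rmd^{-1}$ entrywise, the diagonal multiplier is real, and its entries are invariant under the index reversal $p\mapsto N-p$ that conjugating $\calF_\rmd$ induces (the $d_p$ in \eqref{def:da} satisfy $d_{N-p}=d_p$); put differently, $\dfl{\frac{\alpha}{4}}$ is a real discrete convolution and hence maps real grid functions to real grid functions. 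Granting this, the chain of equalities above is precisely \eqref{sbpf}. Alternatively, one can bypass the inner-product bookkeeping entirely and evaluate both sides by discrete Parseval, where each collapses to $\Delta x\sum_{p=0}^{N-1} d_p (\calF_\rmd\bU)_p\overline{(\calF_\rmd\bV)_p}$. I do not expect a genuine obstacle here: as the surrounding text indicates, this is an immediate consequence of Lemma~\ref{lem:her}, and the only thing to watch is the placement of the complex conjugates.
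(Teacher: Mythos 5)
Your proof is correct and follows the same route the paper takes: the paper gives no separate argument, simply noting that the identity is an immediate consequence of Lemma~\ref{lem:her} via the Hermiticity of $\dfl{\frac{\alpha}{4}}$ and the factorization $\dfl{\frac{\alpha}{2}} = \dfl{\frac{\alpha}{4}}\circ\dfl{\frac{\alpha}{4}}$. Your extra care in checking that $\dfl{\frac{\alpha}{4}}$ commutes with complex conjugation (so that $\overline{\dfl{\frac{\alpha}{4}}V_k}$ matches the $\dfl{\frac{\alpha}{4}}\overline{V}_k$ appearing in \eqref{sbpf}) is a point the paper leaves implicit but covers by the realness observation in Remark~\ref{rm:real}.
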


\begin{remark}
\label{rm:real}

It can be readily checked that $\calF_\rmd ^{-1} D_\alpha \calF_\rmd$
is a real matrix,
but this property does not hold if we replace $D_\alpha$ with
a general real diagonal matrix.
\end{remark}

\section{Linearly implicit scheme}
\label{sec3}

In this section, we present a linearly implicit scheme for the FNLS equation
\eqref{eq:fnls}, show several properties of the scheme, and discuss the implementation with an emphasis on  preconditioning issues.

\subsection{Linearly implicit scheme}
We propose the following linearly implicit scheme:
given an initial approximation $\bU^{(0)}\in X_\rmd$ and starting approximation
$\bU^{(1)} \in X_\rmd$, we compute $\bU^{(n)}$ $(n=2,3,\dots )$ by
\begin{align} \label{scheme1}
\frac{U_k^{(n+1)}-U_k^{(n-1)}}{2 \Delta t}
= -\rmi \dfl{\frac{\alpha}{2}} \paren*{\frac{U_k^{(n+1)} + U_k^{(n-1)}}{2}}
+ \rmi \left| U_k^{(n)}\right| ^2 \paren*{\frac{U_k^{(n+1)}+U_k^{(n-1)}}{2}} & \\
\quad (k = 0 ,\dots, N-1).&
\end{align}
The initial approximation is usually set to
$U_k^{(0)} = u_0(k\Delta x)$.
The starting value $\bU^{(1)}$ can be prepared in several ways, and this issue will be discussed later.
The scheme \eqref{scheme1} can be written in the following form:
\begin{align} \label{scheme2}
\paren*{I + \rmi \Delta t \paren*{\calF_\rmd^{-1} D_\alpha \calF_\rmd - D(\bU^{(n)})}}
\bU^{(n+1)} =
\paren*{I - \rmi \Delta t \paren*{\calF_\rmd^{-1} D_\alpha \calF_\rmd - D(\bU^{(n)})}}
\bU^{(n-1)},
\end{align}
where $D(\bU) = \diag (|U_0|^2, |U_1|^2 , \dots , |U_{N-1}|^2)$;
therefore a linear system with the coefficient matrix $\displaystyle I + \rmi \Delta t \paren*{\calF_\rmd^{-1} D_\alpha \calF_\rmd - D(\bU^{(n)})}$ needs to be solved per each time step.
We note that the coefficient matrix is complex and symmetric
because $\calF_\rmd ^{-1} D_\alpha \calF_\rmd$ is a real
symmetric matrix according to Lemma~\ref{lem:her} and Remark~\ref{rm:real}.

Below, we show that the scheme~\eqref{scheme1} preserves the following discrete mass and energy:
\begin{align}
\Md ( \bU) &= \Delta x \sum_{k} \left|U_k\right|^2, \label{dn} \\
\Hd (\bU,\bV) &= \Delta x  \sum_{k} \paren*{
-
\frac{\left|  \paren*{\fl{\frac{\alpha}{4}} \bU}_k \right| ^2
+ \left|  \paren*{\fl{\frac{\alpha}{4}} \bV}_k \right| ^2}{2}
 + \frac{\left| U_k \right|^2 \left| V_k \right|^2}{2}}. \label{de}
\end{align}
Note that though the discrete energy depends on two solution vectors,
it seems a reasonable one since if $\bV = \bU$ it becomes a more straightforward definition
\begin{align}
\tilde{\calH}_\rmd (\bU):= \calH_\rmd(\bU,\bU) =
\Delta x  \sum_{k} \paren*{
-
\left|  \paren*{\fl{\frac{\alpha}{4}} \bU}_k \right| ^2
 + \frac{\left| U_k \right|^4}{2}}. \label{de1}
\end{align}

\begin{theorem} \label{th:scheme1}
The scheme \eqref{scheme1}
conserves the discrete mass $\Md$ and the discrete energy $\Hd$ in the sense that
\begin{align}
\Md \paren*{\bU^{(n+1)}} = \Md \paren*{\bU^{(n-1)}}, \quad
\Hd \paren*{\bU^{(n+1)}, \bU^{(n)} }
= \Hd \paren*{\bU^{(n)}, \bU^{(n-1)} } , \quad
n=1,2,\dots .
\end{align}
\end{theorem}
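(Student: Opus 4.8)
The plan is to mimic, in the discrete setting, the two computations carried out in the proof of the continuous conservation laws, using the summation-by-parts formula \eqref{sbpf} as the discrete analogue of \eqref{ibp1}. Throughout, write $\bU^{\pm} = \bU^{(n\pm 1)}$ and let $\bU^{(n)}$ be the middle level; denote by $W_k = (U_k^{(n+1)}+U_k^{(n-1)})/2$ the average appearing in \eqref{scheme1}, so that the scheme reads $(U_k^+ - U_k^-)/(2\Delta t) = -\rmi\,\dfl{\frac{\alpha}{2}}W_k + \rmi|U_k^{(n)}|^2 W_k$.

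For the mass, I would compute $\Md(\bU^+) - \Md(\bU^-) = \Delta x\sum_k\paren*{|U_k^+|^2 - |U_k^-|^2} = \Delta x\sum_k\paren*{(U_k^+-U_k^-)\overline{W_k} + W_k(\overline{U_k^+}-\overline{U_k^-})}$, using the elementary identity $|a|^2 - |b|^2 = (a-b)\overline{(a+b)/2}\cdot 1 + \ldots$; more cleanly, $|a|^2-|b|^2 = 2\,\mathrm{Re}\,\big((a-b)\overline{(a+b)}/2\big)$. Substituting $U_k^+ - U_k^- = 2\Delta t\big(-\rmi\dfl{\frac{\alpha}{2}}W_k + \rmi|U_k^{(n)}|^2 W_k\big)$ and using that the nonlinear term contributes $\rmi|U_k^{(n)}|^2|W_k|^2$, which is purely imaginary and cancels against its conjugate, the only surviving term is $\mp\rmi\,\Delta t\cdot 2\Delta x\sum_k\big[(\dfl{\frac{\alpha}{2}}W_k)\overline{W_k} - W_k\overline{\dfl{\frac{\alpha}{2}}W_k}\big]$; by \eqref{sbpf} (applied with $\bU=\bV=\bW$) both summands equal $\Delta x\sum_k|\dfl{\frac{\alpha}{4}}W_k|^2$, so the bracket vanishes and $\Md(\bU^+)=\Md(\bU^-)$.

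For the energy I would directly expand the telescoping difference
\begin{align*}
\Hd(\bU^+,\bU^{(n)}) - \Hd(\bU^{(n)},\bU^-)
= \Delta x\sum_k \paren*{-\frac{|\dfl{\frac{\alpha}{4}}U_k^+|^2 - |\dfl{\frac{\alpha}{4}}U_k^-|^2}{2} + \frac{|U_k^{(n)}|^2\paren*{|U_k^+|^2 - |U_k^-|^2}}{2}},
\end{align*}
noting that the $|\dfl{\frac{\alpha}{4}}U_k^{(n)}|^2$ terms and the structure of \eqref{de} make the cross-level terms cancel in pairs. I would then write $|\dfl{\frac{\alpha}{4}}U_k^+|^2 - |\dfl{\frac{\alpha}{4}}U_k^-|^2 = 2\,\mathrm{Re}\big((\dfl{\frac{\alpha}{4}}(U_k^+-U_k^-))\overline{\dfl{\frac{\alpha}{4}}(U_k^++U_k^-)}\big)$ and use \eqref{sbpf} to move one $\dfl{\frac{\alpha}{4}}$ across, turning the quadratic-form difference into $2\,\mathrm{Re}\,\Delta x\sum_k (\dfl{\frac{\alpha}{2}}(U_k^+-U_k^-))\overline{W_k}\cdot(\text{factor})$; similarly $|U_k^+|^2-|U_k^-|^2 = 2\,\mathrm{Re}\big((U_k^+-U_k^-)\overline{2W_k}\big)/2$. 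After these substitutions the whole difference becomes $\mathrm{Re}$ of $\Delta x\sum_k (U_k^+ - U_k^-)\overline{\big(-\dfl{\frac{\alpha}{2}}W_k + |U_k^{(n)}|^2 W_k\big)}$ (the real symmetry of $\dfl{\frac{\alpha}{2}}$ guaranteed by Lemma~\ref{lem:her} and Remark~\ref{rm:real} is what lets the adjoint land back on $W_k$ without a sign or conjugation obstruction). By the scheme \eqref{scheme1}, $-\dfl{\frac{\alpha}{2}}W_k + |U_k^{(n)}|^2 W_k = \frac{1}{\rmi}\cdot\frac{U_k^+-U_k^-}{2\Delta t} = -\rmi\,\frac{U_k^+-U_k^-}{2\Delta t}$, so the summand is $\mathrm{Re}\big(\overline{-\rmi}\,|U_k^+-U_k^-|^2/(2\Delta t)\big) = \mathrm{Re}\big(\rmi\,|U_k^+-U_k^-|^2/(2\Delta t)\big) = 0$, which closes the argument.

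The main obstacle is purely bookkeeping: one must be careful that the discrete energy \eqref{de}, which is a function of two consecutive levels, really telescopes so that $\Hd(\bU^+,\bU^{(n)}) - \Hd(\bU^{(n)},\bU^-)$ reduces to the single-difference expression above (the $\bV$-slot at step $n$ must coincide with the $\bU$-slot at step $n-1$, and the mixed product $|U^{(n)}|^2$ in the nonlinear part is exactly the frozen coefficient in \eqref{scheme1} — this matching is the whole point of the chosen discretization). The other delicate point is tracking complex conjugates so that every nonlinear and every fractional-Laplacian contribution is manifestly the real part of something times $\rmi$ or $-\rmi$, hence zero; this is where \eqref{sbpf} together with the \emph{real} symmetry (not merely Hermiticity) of $\dfl{\frac{\alpha}{2}}$ is essential. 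No step requires an estimate or an inequality, so there is no analytic difficulty — only the discipline of not dropping a factor of two or a sign.
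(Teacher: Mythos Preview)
Your approach is correct and essentially identical to the paper's: factor the differences $|a|^2-|b|^2$ (the discrete ``chain rule''), substitute the scheme \eqref{scheme1}, and use the summation-by-parts formula \eqref{sbpf} so that every surviving contribution is of the form $\rmi\cdot(\text{real})$ and hence vanishes. The only cosmetic difference is that the paper writes each $z+\bar z$ explicitly where you write $2\,\mathrm{Re}\,z$; your observation that the \emph{realness} of $\dfl{\frac{\alpha}{2}}$ (Remark~\ref{rm:real}), not merely its Hermiticity, is what justifies $\overline{\dfl{\frac{\alpha}{4}}W_k}=\dfl{\frac{\alpha}{4}}\overline{W_k}$ is a point the paper uses silently.
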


\begin{proof}
First, we prove the discrete mass preservation.
\begin{align}
&\frac{1}{2\Delta t} \paren*{\Md \paren*{\bU^{(n+1)}} - \Md \paren*{\bU^{(n-1)}} } \\
&\quad=
	\Delta x \sum_{k=0}^{N-1} \left[
	\paren*{\frac{U_k^{(n+1)}-U_k^{(n-1)}}{2\Delta t}}
	\paren*{\overline{\frac{U_k^{(n+1)} + U_k^{(n-1)}}{2}}}  
	+
	\paren*{\frac{U_k^{(n+1)} + U_k^{(n-1)}}{2}}
	\paren*{\overline{\frac{U_k^{(n+1)}-U_k^{(n-1)}}{2\Delta t}}}
	\right] \\
&\quad =
	\Delta x \sum_{k=0}^{N-1} \left[
	\paren*{
	-\rmi \dfl{\frac{\alpha}{2}} \paren*{\frac{U_k^{(n+1)} + U_k^{(n-1)}}{2} }
	+ \rmi \left| U_k^{(n)}\right|^2 \paren*{\frac{U_k^{(n+1)}-U_k^{(n-1)}}{2\Delta t}}}
	\paren*{\overline{\frac{U_k^{(n+1)} + U_k^{(n-1)}}{2}}}
	\right. \\
& \quad \phantom{	=(\Delta x)^d \sum_{k} }
	+
	\left.
	\paren*{\frac{U_k^{(n+1)} + U_k^{(n-1)}}{2}}
	\paren*{
	\rmi \dfl{\frac{\alpha}{2}} \paren*{\overline{\frac{U_k^{(n+1)} + U_k^{(n-1)}}{2} }}
	- \rmi \left| U_k^{(n)}\right|^2 \paren*{\overline{\frac{U_k^{(n+1)}-U_k^{(n-1)}}{2\Delta t}}}
	}
	\right] \\
& \quad =
	\rmi \Delta x \sum_{k=0}^{N-1} \left[
	-
	\paren*{\dfl{\frac{\alpha}{2}} V_k^{(n)}} \overline{V_k^{(n)}}
	+
	V_k \paren*{\dfl{\frac{\alpha}{2}} \overline{V_k^{(n)}}}
	\right] \\
& \quad =
	\rmi \Delta x \sum_{k=0}^{N-1} \left[
	-
	\paren*{\dfl{\frac{\alpha}{4}} V_k^{(n)}}
	\paren*{\dfl{\frac{\alpha}{4}} \overline{V_k^{(n)}}}
	+
	\paren*{\dfl{\frac{\alpha}{4}} \overline{V_k^{(n)}}}
	\paren*{\dfl{\frac{\alpha}{4}} V_k^{(n)}}
	\right]
	=0,
\end{align}
where $V_k^{(n)} = \paren*{U_k^{(n+1)} + U_k^{(n-1)}}/2$.
The first equality is just the factorization,
which corresponds to the chain rule.
In the second equality, we have substituted the scheme~\eqref{scheme1}.
The fourth equality is due to the summation-by-parts formula \eqref{sbpf}.

Next, we prove the discrete energy preservation.
\begin{align}
&\frac{1}{\Delta t} \paren*{\Hd \paren*{\bU^{(n+1)}, \bU^{(n)} } - \Hd \paren*{\bU^{(n)}, \bU^{(n-1)} }} \\
&\quad=
	\Delta x\sum_{k=0}^{N-1}
	\left[
	- \frac{\left| \dfl{\frac{\alpha}{4}} U_k^{(n+1)} \right|^2
	- \left| \dfl{\frac{\alpha}{4}} U_k^{(n-1)} \right|^2}{2\Delta t} 
	+
	\frac{\left| U_k^{(n)} \right|^2
	\paren*{\left| U_k^{(n+1)} \right|^2 - \left| U_k^{(n-1)} \right|^2}
	}{2\Delta t}
	\right] \\
&\quad=
	\Delta x\sum_{k=0}^{N-1}
	\left[
	-
	\paren*{\dfl{\frac{\alpha}{4}} \paren*{\frac{U_k^{(n+1)}-U_k^{(n-1)}}{2\Delta t}}}
	\paren*{\dfl{\frac{\alpha}{4}} \paren*{\overline{\frac{U_k^{(n+1)}+U_k^{(n-1)}}{2}}}} \right. \\
&\quad	\phantom{	=(\Delta x)^d \sum_{k} }
	-
	\paren*{\dfl{\frac{\alpha}{4}} \paren*{\frac{U_k^{(n+1)}+U_k^{(n-1)}}{2}}}
	\paren*{\dfl{\frac{\alpha}{4}} \paren*{\overline{\frac{U_k^{(n+1)}-U_k^{(n-1)}}{2\Delta t}}}}
	\\
& \quad \phantom{	=(\Delta x)^d \sum_{k} }
	+
	\left| U_k^{(n)} \right| ^2
	\paren*{\frac{U_k^{(n+1)}-U_k^{(n-1)}}{2\Delta t}}
	\paren*{\overline{\frac{U_k^{(n+1)}+U_k^{(n-1)}}{2}}}\\
& \quad \phantom{	=(\Delta x)^d \sum_{k} } \left.
	+
	\left| U_k^{(n)} \right| ^2
	\paren*{\frac{U_k^{(n+1)}+U_k^{(n-1)}}{2}}
	\paren*{\overline{\frac{U_k^{(n+1)}-U_k^{(n-1)}}{2\Delta t}}}
	\right] \\
&\quad=
	\Delta x\sum_{k=0}^{N-1}
	\left[
	-
	\paren*{\frac{U_k^{(n+1)}-U_k^{(n-1)}}{2\Delta t}}
	\paren*{
	\dfl{\frac{\alpha}{2}} \overline{V_k^{(n)}} - \left| U_k^{(n)} \right| ^2 \overline{V_k^{(n)}}
	} \right. 
	\\
& \quad \phantom{	=(\Delta x)^d \sum_{k} } \left.
	-
	\paren*{
	\dfl{\frac{\alpha}{2}} V_k^{(n)} - \left| U_k^{(n)} \right| ^2 V_k^{(n)}
	}
	\paren*{\overline{\frac{U_k^{(n+1)}-U_k^{(n-1)}}{2\Delta t}}}
	\right] \\
&\quad=
	\Delta x \sum_{k=0}^{N-1}
	\left[
	-
	\paren*{\frac{U_k^{(n+1)}-U_k^{(n-1)}}{2\Delta t}}
	\paren*{-\rmi
	\paren*{\overline{\frac{U_k^{(n+1)}-U_k^{(n-1)}}{2\Delta t}}}} \right. 
	\\
& \quad \phantom{	=(\Delta x)^d \sum_{k} } \left. 
	-
	\paren*{\rmi
	\paren*{\frac{U_k^{(n+1)}-U_k^{(n-1)}}{2\Delta t}}
	}
	\paren*{\overline{\frac{U_k^{(n+1)}-U_k^{(n-1)}}{2\Delta t}}}
	\right] \\
&=0.
\end{align}
In the second equality, we have used the identity
$aA - bB = (a+b)(A-B)/2 + (a-b)(A+B)/2$,
and in the third equality, we have substituted the scheme~\eqref{scheme1}.
\end{proof}

\begin{remark}
In the above discussion, defining the discrete quantities is the most crucial part, since if we employ other definitions, the corresponding schemes vary and might be nonlinear. In particular, the key to defining \eqref{de} is that it is at most quadratic with respect to both vectors $\bU$ and $\bV$. For more details, we refer the reader to~\cite{do11,fm11,mf01}.
\end{remark}

In general, while the computational complexity of the time integration of
linearly implicit schemes is much cheaper than that of nonlinear schemes,
the former often suffers from the strict restriction on step sizes to ensure the solvability (see, e.g.~\cite{mm12}).
Fortunately, the following theorem shows that the scheme \eqref{scheme1} is unconditionally solvable without any restriction on the time and space step sizes.

\begin{theorem}[Unique solvability]
For $n\geq 1$,
let
$\bU^{(n-1)}\in X_\rmd$  and
$\bU^{(n)}\in X_\rmd$ be given.
Then,
the scheme \eqref{scheme1} has a unique solution $\bU^{(n+1)}$
without any restriction on the time and space step sizes.
\end{theorem}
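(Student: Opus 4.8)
The plan is to reduce the unique solvability of the scheme to showing that the coefficient matrix
$A := I + \rmi \Delta t \paren*{\calF_\rmd^{-1} D_\alpha \calF_\rmd - D(\bU^{(n)})}$
appearing in \eqref{scheme2} is nonsingular. Since \eqref{scheme2} is a square linear system, it suffices to prove that $A \bx = \bm 0$ forces $\bx = \bm 0$. First I would observe, using Lemma~\ref{lem:her} and Remark~\ref{rm:real}, that $B := \calF_\rmd^{-1} D_\alpha \calF_\rmd - D(\bU^{(n)})$ is a real symmetric (hence Hermitian) matrix, because $\calF_\rmd^{-1} D_\alpha \calF_\rmd$ is real symmetric and $D(\bU^{(n)}) = \diag(|U_0^{(n)}|^2,\dots,|U_{N-1}^{(n)}|^2)$ is a real diagonal matrix.

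Next I would take the standard Hermitian-perturbation-of-the-identity argument. Suppose $A\bx = \bm 0$ for some $\bx \in \bbC^N$; then $\bx + \rmi \Delta t\, B\bx = \bm 0$. Taking the Hermitian inner product with $\bx$ gives $\ip{\bx}{\bx} + \rmi \Delta t\, \ip{B\bx}{\bx} = 0$. Because $B$ is Hermitian, $\ip{B\bx}{\bx}$ is real, so $\rmi \Delta t\, \ip{B\bx}{\bx}$ is purely imaginary while $\ip{\bx}{\bx} = \|\bx\|^2 \geq 0$ is real and nonnegative. Separating real and imaginary parts of the equation $\|\bx\|^2 + \rmi \Delta t\, \ip{B\bx}{\bx} = 0$ immediately yields $\|\bx\|^2 = 0$, hence $\bx = \bm 0$. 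This shows $A$ is injective on $\bbC^N$, therefore invertible, and so \eqref{scheme2} — equivalently \eqref{scheme1} — has a unique solution $\bU^{(n+1)}$ for every right-hand side, regardless of the sizes of $\Delta t$ and $\Delta x$.

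There is no real obstacle here; the only point that needs care is making sure the Hermitian structure of $B$ is genuinely established rather than assumed. The mild subtlety — flagged in Remark~\ref{rm:real} — is that $\calF_\rmd^{-1} D_\alpha \calF_\rmd$ being both real and symmetric depends on the specific symmetric arrangement of the entries of $D_\alpha$ in \eqref{def:da} (the matching $|\mu p|^\alpha$ and $|\mu(p-N)|^\alpha$ blocks), not merely on $D_\alpha$ being real diagonal; I would cite Lemma~\ref{lem:her} together with Remark~\ref{rm:real} for this. Once that is in hand, the eigenvalues of $B$ are real, so the eigenvalues of $A$ all have real part $1 \neq 0$, which is an alternative one-line phrasing of the same nonsingularity conclusion. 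I would present the inner-product computation as the main argument since it parallels the summation-by-parts manipulations already used in the proof of Theorem~\ref{th:scheme1}.
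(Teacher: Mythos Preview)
Your proposal is correct and follows essentially the same route as the paper: both arguments identify $B = \calF_\rmd^{-1} D_\alpha \calF_\rmd - D(\bU^{(n)})$ as Hermitian (via Lemma~\ref{lem:her} and the fact that $D(\bU^{(n)})$ is real diagonal) and then conclude that $I + \rmi \Delta t\, B$ is nonsingular because its spectrum lies on the vertical line $\mathrm{Re}\,\lambda = 1$. The paper states this directly in eigenvalue language, while you frame it as the equivalent inner-product computation $\|\bx\|^2 + \rmi \Delta t\, \ip{B\bx}{\bx} = 0 \Rightarrow \bx = 0$; you yourself note the eigenvalue phrasing as an alternative, so there is no substantive difference.
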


\begin{proof}
We show that the operator $I + \rmi \Delta t \paren*{\calF_\rmd^{-1} D_\alpha \calF_\rmd - D(\bU^{(n)})}$
in \eqref{scheme2} is nonsingular,
independently of $\bU^{(n)}$.
Since $\calF_\rmd^{-1} D_\alpha \calF_\rmd$ is Hermitian due to Lemma~\ref{lem:her},
its eigenvalues are all real.
Note that $D (\bU^{(n)})$ is a diagonal matrix and its elements are all real.
Therefore, the real part of all eigenvalues of $I + \rmi \Delta \paren*{\calF_\rmd^{-1} D_\alpha \calF_\rmd - D(\bU^{(n)})}$
is $1$, which indicates that the operator is nonsingular.
\end{proof}

\subsection{Starting procedure}
The starting approximation $\bU^{(1)}$ can be computed in many different ways.
Among several possibilities,
 in this paper we employ the Crank--Nicolson scheme
\begin{align} \label{scheme3}
\frac{U_k^{(1)}-U_k^{(0)}}{\Delta t}
= -\rmi \dfl{\frac{\alpha}{2}} \paren*{\frac{U_k^{(1)} + U_k^{(0)}}{2}}
+ \rmi \paren*{\frac{\left| U_k^{(1)}\right| ^2 + \left| U_k^{(0)}\right| ^2}{2}} \paren*{\frac{U_k^{(1)}+U_k^{(0)}}{2}} &\\
\quad (k = 0 ,\dots, N-1). &
\end{align}
As is the case with the finite difference scheme applied to the case $\alpha = 2$ \cite{dfp81},
it is easily verified that
the scheme~\eqref{scheme3} conserves the discrete mass $\calM_\rmd$ and the discrete energy $\tilde{\calH}_\rmd$
in the sense that
\begin{align}
\calM_\rmd \paren*{\bU^{(1)}} = \calM_\rmd \paren*{\bU^{(0)}}, \quad
\tilde{\calH}_\rmd \paren*{\bU^{(1)} }
= \tilde{\calH}_\rmd \paren*{\bU^{(0)} } .
\end{align}

As long as this nonlinear scheme~\eqref{scheme3}
or a mass-preserving one-step scheme is used to obtain the starting approximation
$\bU^{(1)}$,
it follows that $\Md (\bU^{(n)}) = \Md (\bU^{(0)})$ ($n=0,1,2,\dots$)
for the solution to the linearly implicit scheme~\eqref{scheme1}.
However, even if the nonlinear scheme~\eqref{scheme3} is employed to obtain $\bU^{(1)}$,
$\tilde{\calH}_\rmd (\bU^{(n)}) \neq \tilde{\calH}_\rmd (\bU^{(0)})$ in general.
In Section~\ref{subsec:energy-pre}, we shall numerically investigate to what extent $\tilde{\calH}_\rmd (\bU^{(n)})$ remains close to $\tilde{\calH}_\rmd(\bU^{(0)})$.

\begin{remark}
Nonlinear schemes preserving either the mass or energy
can also be derived, as presented in~\cite{ce12} for the case $\alpha = 2$.
There, the idea is to apply the average vector field method~\cite{qm08} to a semi-discrete scheme which is obtained based on a variational structure.
\end{remark}

\subsection{Linearly implicit schemes for the FNLS with strong nonlinearity}
\label{sec3.3}

In addition to the FNLS equation with cubic nonlinearity,
which is the main focus of this paper,
we briefly explain that
similar structure-preserving linearly implicit schemes
can be constructed for the FNLS equation with stronger nonlinearity.
We consider
\begin{align} \label{eq:fnlsn}
u_t =-\rmi \fl{\frac{\alpha}{2}} u + \rmi |u|^{2\rho} u, \quad x\in\bbT, \ t>0,
\end{align}
where $\rho \in \{1,2,\dots\}$.
For this equation, while the mass \eqref{eq:mass} remains an invariant,
the definition of the energy is modified to
\begin{align}
\calH^\rho (u) = \int_{\bbT}
\paren*{-\left|\fl{\frac{\alpha}{4}}u \right|^2 + \frac{|u|^{2\rho+2}}{\rho + 1}}\,\rmd x. \label{eq:energy2}
\end{align}
Note that this reduces to \eqref{eq:energy1} when $\rho=1$.
We have already discussed the case $\rho = 1$ in the above subsections.
Observe that the scheme \eqref{scheme1} is a two-step method.
If $\rho \geq 2$, an intended scheme requires more steps so that the resulting scheme is linear
in terms of the unknown solution vector.
Our intended scheme is defined by
\begin{align} \label{scheme4}
&\frac{U_k^{(n+1)}-U_k^{(n-\rho)}}{(\rho+1) \Delta t} \\
& \quad  = -\rmi \dfl{\frac{\alpha}{2}} \paren*{\frac{U_k^{(n+1)} + U_k^{(n-\rho)}}{2}}
+ \rmi \left| U_k^{(n)}\right| ^2 \cdots \left| U_k^{(n-\rho+1)}\right| ^2 \paren*{\frac{U_k^{(n+1)}+U_k^{(n-\rho)}}{2}}  \\
& \hspace*{10.5cm} (k = 0 ,\dots, N-1),
\end{align}
which is a ($\rho+1$)-step method.
This scheme is mass-preserving $\Md (\bU^{(n+1)}) = \Md (\bU^{(n-\rho)})$
and further energy-preserving in the sense that
$\Hd^\rho \paren*{\bU^{(n+1)},\dots, \bU^{(n-\rho+1)} }
= \Hd^\rho \paren*{\bU^{(n)},\dots, \bU^{(n-\rho)} }$,
where the discrete energy $\Hd^\rho$ is defined by
\begin{align}
&\Hd^\rho \paren*{\bU^{(n)},\dots, \bU^{(n-\rho)} } \\
& \quad =
\Delta x \sum_{k=0}^{N-1}
\paren*{
-
\frac{\left|  \paren*{\fl{\frac{\alpha}{4}} \bU^{(n)}}_k \right| ^2
+\cdots + \left|  \paren*{\fl{\frac{\alpha}{4}} \bU^{(n-\rho)}}_k \right| ^2}{\rho+1}
 + \frac{\left| U_k^{(n)} \right|^2 \cdots   \left| U_k^{(n-\rho)} \right|^2}{\rho+1}}.
\end{align}
Note that this quantity is quadratic with respect to each vector $\bU^{(i)}$ ($i=n-\rho,\dots,n$).
The proof of the preservations is similar to that of Theorem~\ref{th:scheme1} and thus omitted.

\subsection{Preconditioning}
\label{subsec:pcond}
The coefficient matrix of the linear system~\eqref{scheme2} is complex and symmetric because $\calF_\rmd ^{-1} D_\alpha \calF_\rmd$ is a real
symmetric matrix according to Lemma~\ref{lem:her} and Remark~\ref{rm:real}.
It is hoped that the linear system is solved efficiently per each time step.

As explained in Section~\ref{sec1},
the multiplication of a vector by the coefficient matrix can be
efficiently computed with $\rmO(N\log N)$ operations
thanks to the fast Fourier transform,
and hence Krylov subspace methods seem suitable choices for
solving \eqref{scheme2}.
For non-Hermitian systems, the Bi-CGSTAB method~\cite{vm90} is a standard choice, but this requires two matrix-vector products per iteration. 
As alternative choices, we test the conjugate orthogonal conjugate gradient (COCG) method~\cite{va92} and the conjugate orthogonal conjugate  residual
(COCR) method~\cite{sz07}, which were specially designed for solving complex and symmetric linear systems.
These methods require only a single matrix-vector product
per iteration.
As will be seen later, they work better than the Bi-CGSTAB method.
However, when relatively large step size or large $N$ is used, 
both the COCG/COCR and Bi-CGSTAB methods tend to require a large number of iterations because the coefficient matrix becomes ill-conditioned,
and this behaviour is problematic especially when we consider multi-dimensional problems.
Therefore, it is worth considering the preconditioning issues for one-dimensional problems.

We need to understand why the convergence behaviour is deteriorated (some numerical results will be illustrated in Section~\ref{subsec:peformpre}).
To simplify the notation, we rewrite \eqref{scheme2} as
\begin{align} \label{eq:ls1}
\paren*{I + \rmi \Delta t \paren*{\calF_\rmd^{-1} D_\alpha \calF_\rmd - D(\bU)}}
\bx =
\bb
\end{align}
where $D(\bU) = \diag (|U_0|^2, |U_1|^2 , \dots , |U_{N-1}|^2)$
and $D_\alpha$ is given in \eqref{def:da}.
The smallest and largest eigenvalues of the sum of the first two terms
$I + \rmi \Delta t\calF_\rmd^{-1} D_\alpha \calF_\rmd$
are $1$ and $1+\rmi \Delta t |\mu (N-1)/2|^\alpha$, respectively,
which indicates that the sum of the first two terms tends to be
ill-conditioned for large $\alpha$, $N$ and $\Delta t$.
On the other hand, since the third term, i.e. the diagonal matrix $\rmi \Delta t D(\bU)$, represents the shape of the solution, its condition
number remains nearly unaffected by the choice of $N$.
Therefore, the third term can be seen as a perturbation.
As will be illustrated in Section~\ref{subsec:peformpre},
the convergence is deteriorated for large $\alpha$, $N$ and $\Delta t$, and 
the sum of the first two terms is certainly the cause of the deterioration. Note that for large $\Delta t$ the influence of the last term could be substantial, and this influence further worsens the convergence.

Below, we consider the preconditioning issues.
Our idea is a combination of a certain variable transformation
and preconditioner.

For $A\bx = \bb$,
the preconditioned COCG and preconditioned Bi-CGSTAB methods with a matrix (preconditioner) $M$
are summarized in Algorithms~\ref{algo:pcocg} and~\ref{algo:pbicgstab},
where $(\bx,\by) = \overline{\bx}^\top \by$.
The preconditioner $M$ is ideally chosen such that
$M^{-1}A$ has a smaller condition number than $A$.
The simplest approach  is to extract the diagonal part of the coefficient matrix as a preconditioner $M$.
However, in our situation,
each diagonal element in the coefficient matrix of \eqref{eq:ls1} has almost the same value, because all diagonal elements of the dominant part
$I + \rmi \Delta t \calF_\rmd^{-1} D_\alpha \calF_\rmd$
are the same.
Therefore, this approach does not improve the convergence behaviour (in fact our preliminary numerical experiments
support this discussion).

\begin{algorithm}
\label{algo:pcocg}
\caption{Preconditioned COCG method}
$\bx_0$ is an initial guess, $\br_0 = \bb-A\bx_0$\\
set $\bp_{-1}=0$, $\beta_{-1}=0$\\
\For{$n=0,1,\dots$ until $\|\br_n\| \leq \varepsilon \| \bb \|$}{
$\bp_n = M^{-1}\br_n + \beta _{n-1}\bp_{n-1}$\\
$\displaystyle \alpha _n = \frac{(\overline{\br}_n, M^{-1}\br_n)}{(\overline{\bp}_n, A\bp_n)} $\\
$\bx_{n+1}=\bx_n+\alpha \bp_n$\\
$\br_{n+1} = \br_n -\alpha A\bp_n$\\
$\displaystyle \beta_n = \frac{(\overline{\br}_{n+1}, M^{-1}\br_{n+1})}{(\overline{\br}_n, M^{-1}\br_n)}$
}
\end{algorithm}

\begin{algorithm}
\label{algo:pbicgstab}
\caption{Preconditioned Bi-CGSTAB method}
$\bx_0$ is an initial guess, $\br_0 = \bb-A\bx_0$\\
set $\tilde{\br} = \br_0$\\
\For{$n=0,1,\dots$ until $\|\br_n\| \leq \varepsilon \| \bb \|$}{
$\rho_n =(\tilde{\br},\br_n) $\\
    \uIf{$n=0$}{
      $\bp_{n+1}=\br_n$
    }
    \Else{
      $\displaystyle \beta_n = \frac{\rho_n \alpha_n}{\rho_{n-1} \omega_n}$\\
      $\bp_{n+1} = \br_n + \beta_n (\bp_n - \omega_n\bv_n)$
    }
    $\hat{\bp}_{n+1} = M^{-1}\bp_{n+1}$\\
    $\bv_{n+1} = A \hat{\bp}_{n+1}$\\
    $\displaystyle \alpha_{n+1} = \frac{\rho_n}{(\tilde{\br},\bv_{n+1})}$\\
    $\bs_{n+1} = \br_n - \alpha_{n+1}\bv_{n+1}$\\
    $\hat{\bs}_{n+1} = M^{-1} \bs_{n+1}$\\
    $\bt_{n+1} = A \hat{\bs}_{n+1}$\\
    $\displaystyle \omega_{n+1} = \frac{(\bt_{n+1},\bs_{n+1})}{(\bt_{n+1},\bt_{n+1})}$\\
    $\bx_{n+1} = \bx_{n} +\alpha_{n+1}\hat{\bp}_{n+1} + \omega_{n+1}\hat{\bs}_{n+1}$\\
    $\br_{n+1} = \bs_{n+1} - \omega_{n+1}\bt_{n+1}$
}
\end{algorithm}

Let us consider a variable transformation $\by = \calF_\rmd \bx $.
By a similarity transformation, the linear system \eqref{eq:ls1} can be rewritten as
\begin{align}\label{eq:ls2}
\paren*{I + \rmi \Delta t D_\alpha  - \rmi \Delta t \calF_\rmd D(\bU)  \calF_\rmd^{-1} }
\by
 =
 \calF_\rmd \bb,
 \quad
 \by = \calF_\rmd \bx.
\end{align}
For the solution to the transformed linear system \eqref{eq:ls2},
the following property holds.
\begin{proposition}
  The relative 2-norm residual for \eqref{eq:ls2} coincides with that for \eqref{eq:ls1}.
\end{proposition}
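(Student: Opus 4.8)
The plan is to exploit the unitarity of the discrete Fourier transform, which is exactly the fact recorded in the proof of Lemma~\ref{lem:her}: $\calF_\rmd^{-1} = \calF_\rmd^\dagger$, so that $\calF_\rmd$ preserves the Euclidean ($2$-norm) length of every vector. The transformed system \eqref{eq:ls2} is obtained from \eqref{eq:ls1} by left-multiplying by $\calF_\rmd$ and inserting $\calF_\rmd^{-1}\calF_\rmd = I$ before $\bx$; writing $A = I + \rmi\Delta t(\calF_\rmd^{-1}D_\alpha\calF_\rmd - D(\bU))$ for the coefficient matrix of \eqref{eq:ls1} and $\tilde A = \calF_\rmd A \calF_\rmd^{-1}$ for that of \eqref{eq:ls2}, we have $\tilde A \by = \calF_\rmd \bb$ with $\by = \calF_\rmd\bx$.

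First I would take an arbitrary approximate solution $\bx$ to \eqref{eq:ls1} and set $\by = \calF_\rmd\bx$, the corresponding approximate solution to \eqref{eq:ls2}. The residual of the transformed system is
\begin{align}
\calF_\rmd\bb - \tilde A\by = \calF_\rmd\bb - \calF_\rmd A \calF_\rmd^{-1}\calF_\rmd\bx = \calF_\rmd\paren*{\bb - A\bx},
\end{align}
so the transformed residual is exactly the image under $\calF_\rmd$ of the original residual. Second, since $\calF_\rmd$ is unitary it is an isometry in the $2$-norm, hence $\|\calF_\rmd(\bb - A\bx)\| = \|\bb - A\bx\|$ and likewise $\|\calF_\rmd\bb\| = \|\bb\|$. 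Dividing, the relative $2$-norm residual $\|\calF_\rmd\bb - \tilde A\by\| / \|\calF_\rmd\bb\|$ for \eqref{eq:ls2} equals $\|\bb - A\bx\|/\|\bb\|$, the relative $2$-norm residual for \eqref{eq:ls1}. Because $\calF_\rmd$ is a bijection, every approximate solution of one system corresponds to exactly one of the other under $\by = \calF_\rmd\bx$, so this identification is complete and the claim follows.

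There is no real obstacle here; the only thing to be careful about is to state clearly \emph{which} pair of iterates is being compared — namely $\bx$ for \eqref{eq:ls1} and its image $\by = \calF_\rmd\bx$ for \eqref{eq:ls2} — since the point of the proposition is precisely that running a Krylov method on the transformed system and pulling back via $\calF_\rmd^{-1}$ produces the same residual history as running it on the original system, so the variable transformation alone changes nothing and any improvement must come from the preconditioner applied afterwards. I would keep the write-up to the two displayed lines above plus a sentence invoking unitarity of $\calF_\rmd$ from the proof of Lemma~\ref{lem:her}.
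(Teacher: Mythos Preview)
Your argument is correct and uses exactly the paper's single ingredient, the unitarity of $\calF_\rmd$. The only difference is the quantity tracked: you work with the residual $\bb - A\bx$ and show $\calF_\rmd\bb - \tilde A\by = \calF_\rmd(\bb - A\bx)$, whereas the paper's own proof works with the \emph{error} vector $\tilde\bx - \bx_\ast$ and shows $\tilde\by - \by_\ast = \calF_\rmd(\tilde\bx - \bx_\ast)$. Both are one-line consequences of $\by=\calF_\rmd\bx$ plus unitarity, so the two write-ups are interchangeable; your version matches the word ``residual'' in the proposition and the stopping criterion $\|\br_n\|\leq\varepsilon\|\bb\|$ of Algorithms~\ref{algo:pcocg} and~\ref{algo:pbicgstab} more literally.

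A small caution on your closing commentary (not the proof itself): the proposition only says that \emph{corresponding} approximations $\bx$ and $\calF_\rmd\bx$ have equal relative residuals. It does not by itself imply that a Krylov method run on \eqref{eq:ls2} produces the $\calF_\rmd$-images of the iterates produced on \eqref{eq:ls1}; for COCG the bilinear form $(\overline{\bx},\by)=\bx^\top\by$ is not invariant under the complex unitary $\calF_\rmd$, so the residual histories can in principle differ even before any preconditioning is applied. This does not affect the proposition or your proof of it, only the informal interpretation you appended.
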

\begin{proof}
  Let $\tilde{\bx}$ and $\tilde{\by}$ be approximations to \eqref{eq:ls1} and \eqref{eq:ls2}, respectively, with
  the relation $\tilde{\by} = \calF_\rmd \tilde{\bx}$.
  The exact solutions are denoted by $\bx_\ast$ and $\by_\ast$.
  Then, it follows that
  \begin{align}
  \frac{\| \tilde{\by} - \by_\ast \|}{\| \calF_\rmd \bb\|}
  = \frac{\| \calF_\rmd (\tilde{\bx} - \bx_\ast) \|}{\| \calF_\rmd \bb\|}
  = \frac{\| \tilde{\bx} - \bx_\ast \|}{\|\bb\|} 
  \end{align}
  due to the unitarity of $\calF_\rmd$.
\end{proof}

This property indicates that we only have to monitor the relative error for the transformed linear system \eqref{eq:ls2} during the iterations
and calculate $\tilde{\bx} = \calF_\rmd^{-1}\tilde{\by}$ only after the error meets the convergence criteria.
As a preconditioner, we use
\begin{align} \label{preconditioner}
M = I + \rmi \Delta t D_\alpha.
\end{align}

It should be noted that the coefficient matrix of the
transformed linear system \eqref{eq:ls2}
is still complex but no longer symmetric
because
$\calF_\rmd D(\bU) \calF_\rmd^{-1}$ is not real (it is skew-Hermite).
Therefore, the transformed linear system is 
out of the range of application of the COCG/COCR method.
Therefore, it is natural to apply the Bi-CGSTAB method to the transformed linear system. We will observe the performance of the preconditioned Bi-CGSTAB method.
We note that although the coefficient matrix of the transformed linear system is no longer symmetric,
the sum of the first two terms $I + \rmi \Delta t D_\alpha$
remains symmetric and the third term $-\rmi \Delta t \calF_\rmd D(\bU) \calF_\rmd^{-1}$ could be regarded as a perturbation.
It is thus still of interest to investigate what happens if we 
aggressively apply the COCG/COCR method to the transformed linear system \eqref{eq:ls2}
with the preconditioner \eqref{preconditioner}.
In Section~\ref{subsec:peformpre}, we will also see how this approach works.

\section{Numerical experiments}
\label{sec4}

We now test our linearly implicit scheme~\eqref{scheme1}.
First, we check the qualitative behaviour  of the numerical solutions obtained from the linearly implicit scheme~\eqref{scheme1} by comparing the results with those from the fully nonlinear scheme~\eqref{scheme3}.
We also check to what extent $\tilde{\calH}_\rmd (\bU^{(n)})$ remains close to $\tilde{\calH}_\rmd(\bU^{(0)})$.
Next, we discuss the efficiency of the scheme with particular emphasis on linear solvers,
and observe how the preconditioned methods work.

All the computations were performed in a computation environment: 1.6 GHz Intel Core i5, 16GB memory, OS X 10.14.
We use Julia version 1.1.1.

\subsection{Numerical behaviour}

We check the numerical behaviour of the proposed scheme~\eqref{scheme1}
and compare the results with reference solutions which are computed by the nonlinear scheme~\eqref{scheme3}.
The linear system in~\eqref{scheme2} is solved by the conjugate orthogonal conjugate gradient (COCG) method~\cite{vm90} in this subsection
(Algorithm~\ref{algo:pcocg} summarizes the preconditioned COCG method, but it reduces to the COCG method with $M=I$).
As an initial guess of the COCG method, we use the numerical solution at the current time step.
The convergence criteria is set to $10^{-10}$ in terms of the relative $2$-norm residual.
The nonlinear system \eqref{scheme3} is solved by \textsf{nlsolve}\footnote{The function \textsf{nlsolve} is a typical nonlinear solver in Julia. \url{https://pkg.julialang.org/docs/NLsolve/}} with the tolerance $10^{-10}$.

As an example, we set $L=20$, $N=101$ and $\Delta t = 0.02$.
The initial value is set to $u_0 (x) = 2\exp (0.5\rmi x)\sech(\sqrt{2}(x-10))$,
which is a snapshot of a solitary wave solution for the case $\alpha=2$.

Figs.~\ref{fig:a2},~\ref{fig:a16} and~\ref{fig:a12} show the contour of the absolute value of numerical results for several $\alpha$ obtained by the linearly
implicit scheme~\eqref{scheme1} and the nonlinear scheme~\eqref{scheme3}.
It is observed that
the linearly implicit scheme exhibits qualitatively comparable results
to the expensive nonlinear scheme.
We note that if $N$ gets further small the  behaviour is deteriorated as shown in Fig.~\ref{fig:a16_s}.
This figure shows the result for the case $\alpha=1.6$ and $N=61$.
It is observed that the speed of the wave tends to be slower than that of the reference solution.
Let us also check the behaviour in terms of the choice of the time step size in more detail for the case $\alpha = 1.6$.
The results are displayed in Fig.~\ref{fig:globalerror}.
From the left figure, it is observed that the global  error becomes small as the time step sizes $\Delta t$ gets small. Since the scheme is symmetric we expect the second order convergence. From the right figure it seems that the scheme is actually of order two.
Figs.~\ref{fig:inverr_a2},~\ref{fig:inverr_a16} and~\ref{fig:inverr_a12}
show errors of the discrete mass and energy.
For the discrete mass $| \calM_\rmd (\bU^{(n)}) - \calM_\rmd (\bU^{(0)})| $ is plotted and
for the discrete energy $| \calH_\rmd (\bU^{(n+1)},\bU^{(n)}) - \calH_\rmd (\bU^{(1)},\bU^{(0)})| $ is plotted.
Both discrete quantities are well-preserved as expected (note that the tolerance of the linear and nonlinear solvers
are set to $10^{-10}$ and the scheme is computed $1.25\times 10^4$ times until $t=250$).

\begin{figure}[htbp]
\centering
\includegraphics[scale=1]{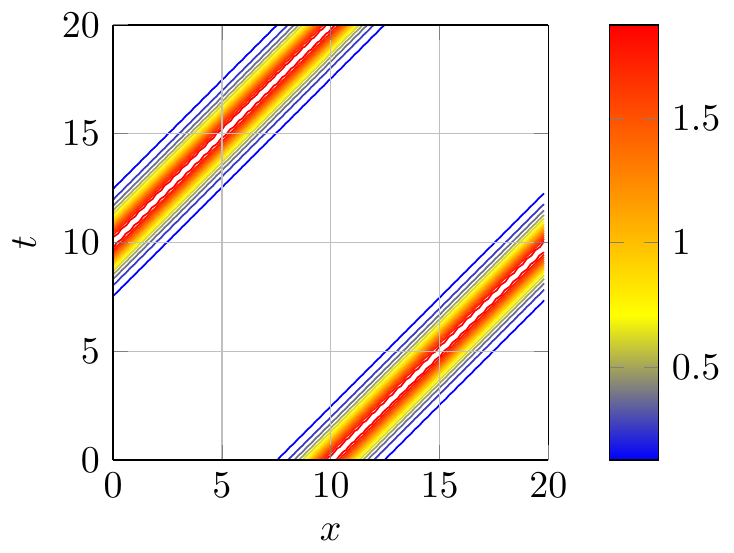}
\includegraphics[scale=1]{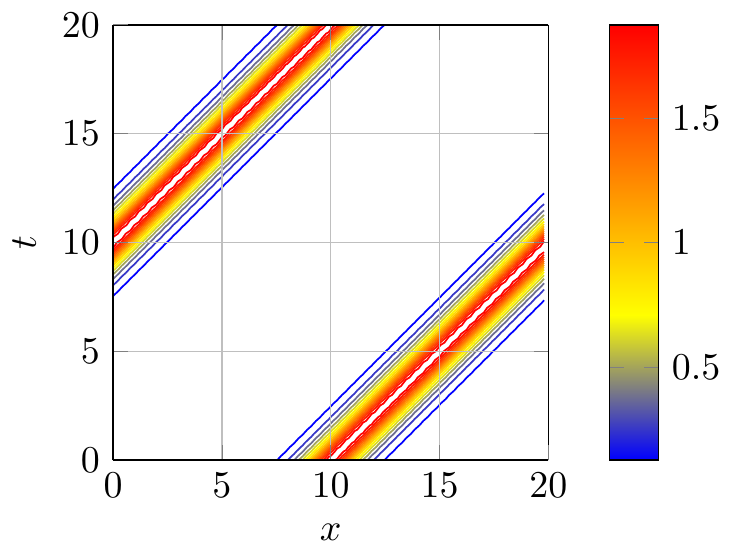}
\caption{Numerical solutions for the case $\alpha = 2$
obtained by (LEFT) the linearly implicit scheme~\eqref{scheme1} and (RIGHT) the nonlinear scheme~\eqref{scheme3}.}
\label{fig:a2}
\end{figure}

\begin{figure}[htbp]
\centering
\includegraphics[scale=1]{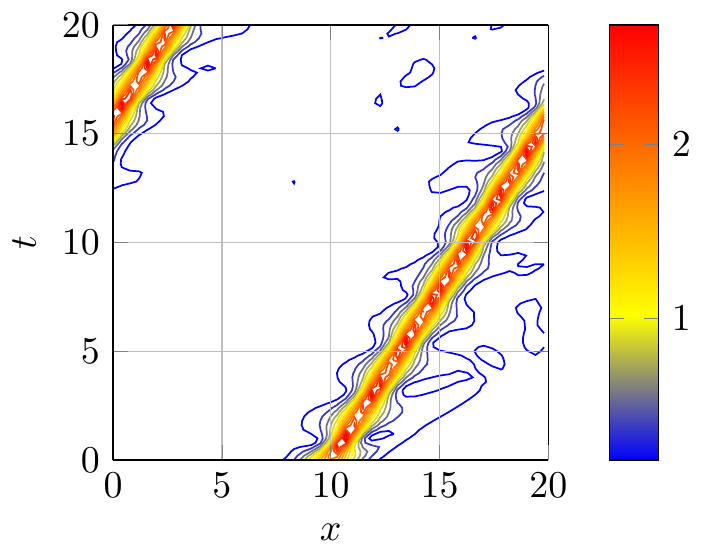}
\includegraphics[scale=1]{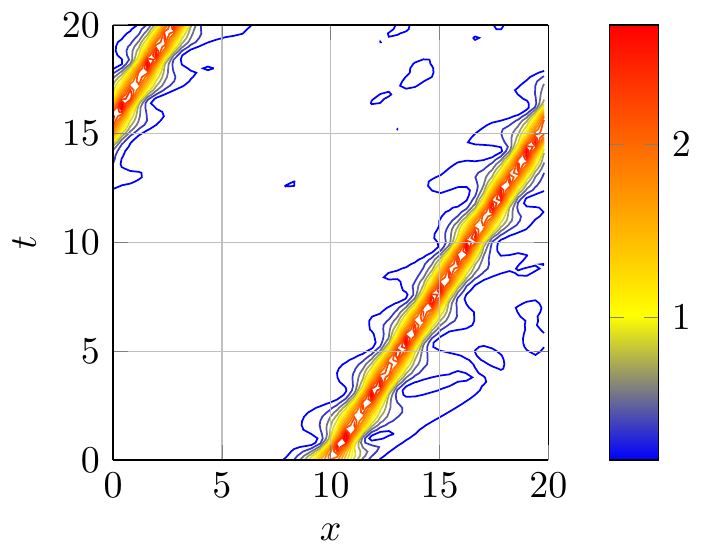}
\caption{Numerical solutions for the case $\alpha = 1.6$
obtained by (LEFT) the linearly implicit scheme~\eqref{scheme1} and (RIGHT) the nonlinear scheme~\eqref{scheme3}.}
\label{fig:a16}
\end{figure}

\begin{figure}[htbp]
\centering
\includegraphics[scale=1]{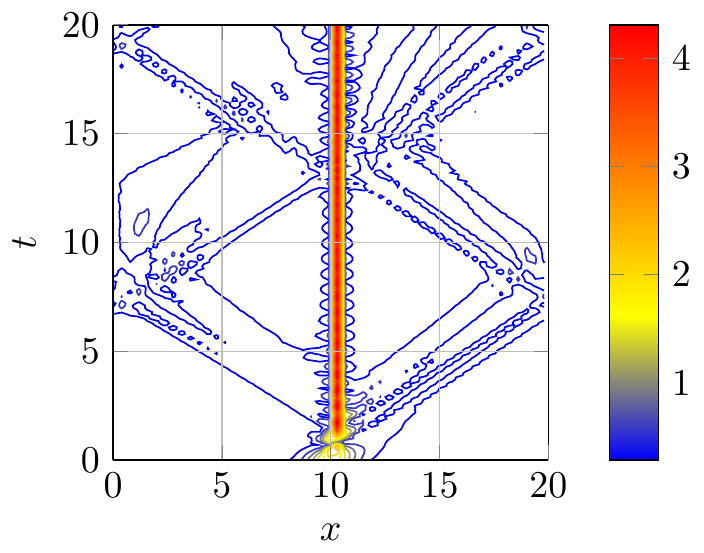}
\includegraphics[scale=1]{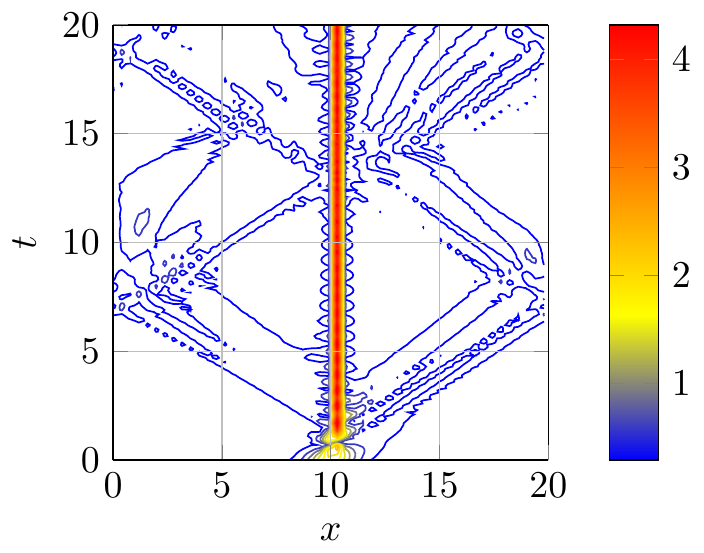}
\caption{Numerical solutions for the case $\alpha = 1.2$
obtained by (LEFT) the linearly implicit scheme~\eqref{scheme1} and (RIGHT) the nonlinear scheme~\eqref{scheme3}.}
\label{fig:a12}
\end{figure}

\begin{figure}[htbp]
\centering
\includegraphics[scale=1]{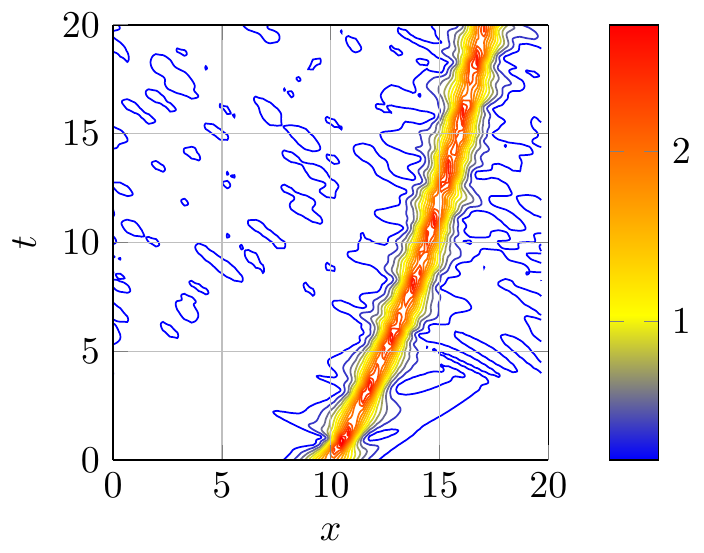}
\caption{Numerical solutions for the case $\alpha = 1.6$
obtained by the linearly implicit scheme~\eqref{scheme1} with $N=61$.}
\label{fig:a16_s}
\end{figure}

\begin{figure}[htbp]
\centering
\includegraphics[scale=1]{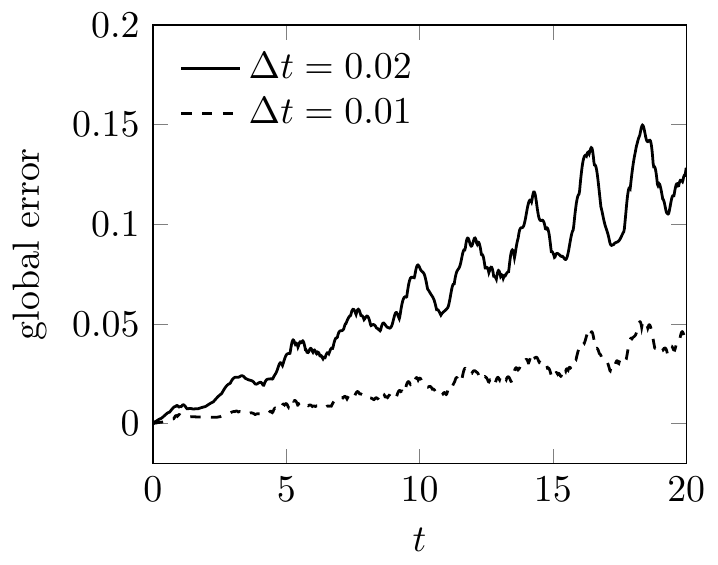}
\includegraphics[scale=1]{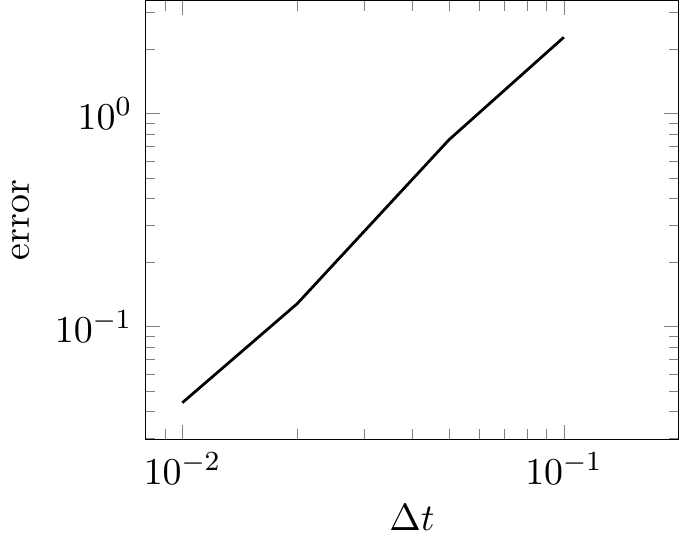}
\caption{
Error behaviour obtained by the linearly implicit scheme~\eqref{scheme1} for the case $\alpha = 1.6$: (LEFT) global error, and
(RIGHT) error at $t=20$.
Errors are measured by $\displaystyle  \max_k | U_k^{(n)} - U_{\text{ref},k}^{(n)}|$,
where the reference solution was generated by the nonlinear scheme~\eqref{scheme3} with $N=303$ and $\Delta t = 0.001$.
}

\label{fig:globalerror}
\end{figure}

\begin{figure}[htbp]
\centering
\includegraphics[scale=1]{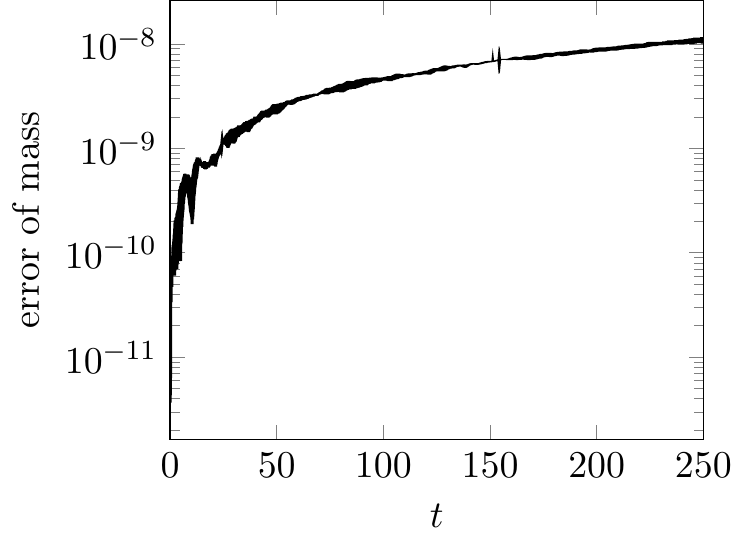}
\includegraphics[scale=1]{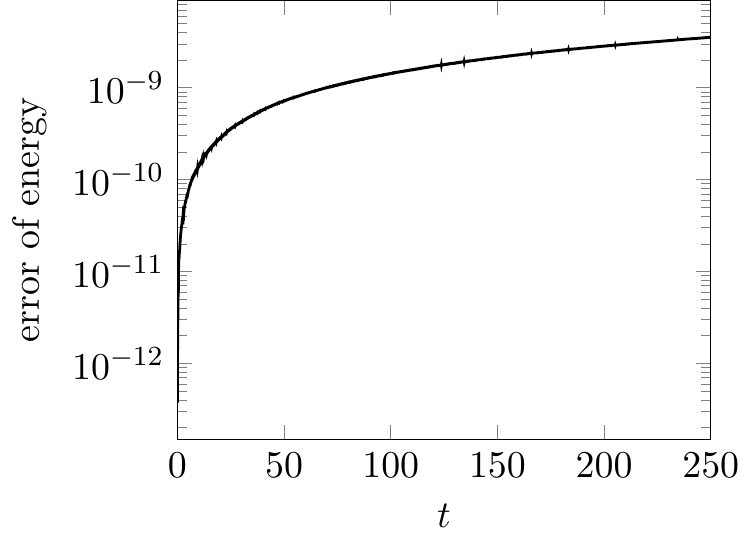}
\caption{Errors of the discrete mass $\calM_\rmd (\bU^{(n)}$ and energy $\calH_\rmd (\bU^{(n+1)},\bU^{(n)})$ obtained by the linearly implicit scheme~\eqref{scheme1}
for the case $\alpha=2.0$. 
}
\label{fig:inverr_a2}
\end{figure}

\begin{figure}[htbp]
\centering
\includegraphics[scale=1]{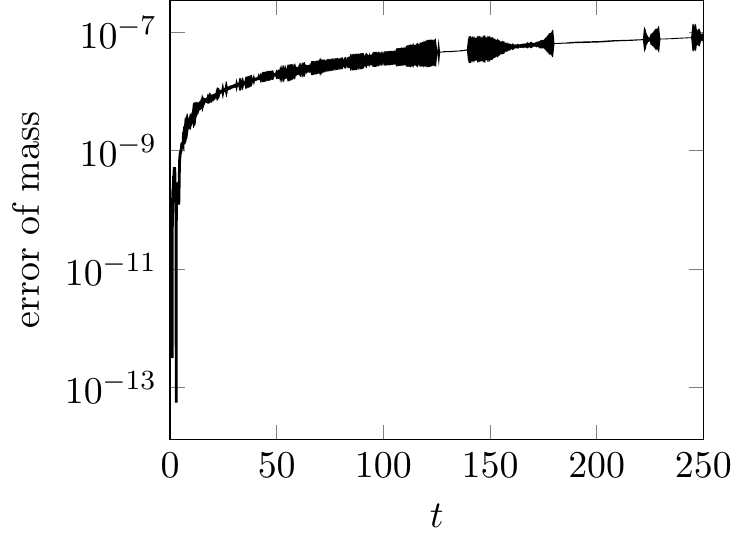}
\includegraphics[scale=1]{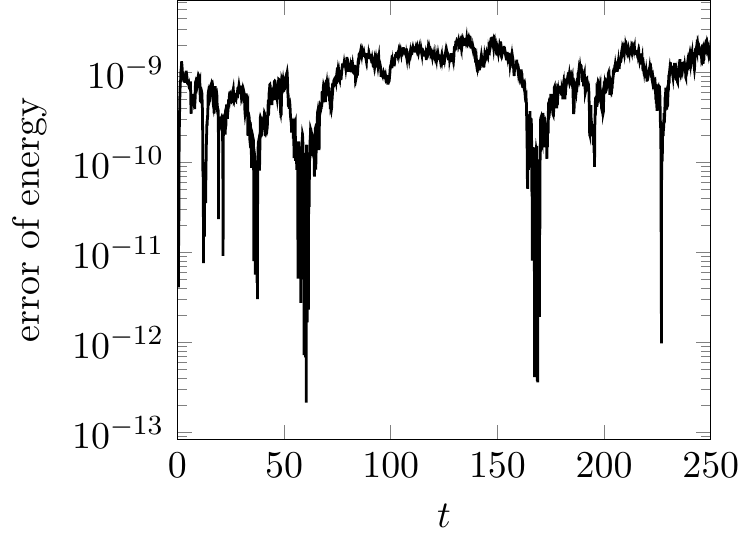}
\caption{Errors of the discrete mass $\calM_\rmd(\bU^{(n)}$ and energy $\calH_\rmd (\bU^{(n+1)},\bU^{(n)})$ obtained by the linearly implicit scheme~\eqref{scheme1}
for the case $\alpha=1.6$. 
}
\label{fig:inverr_a16}
\end{figure}

\begin{figure}[htbp]
\centering
\includegraphics[scale=1]{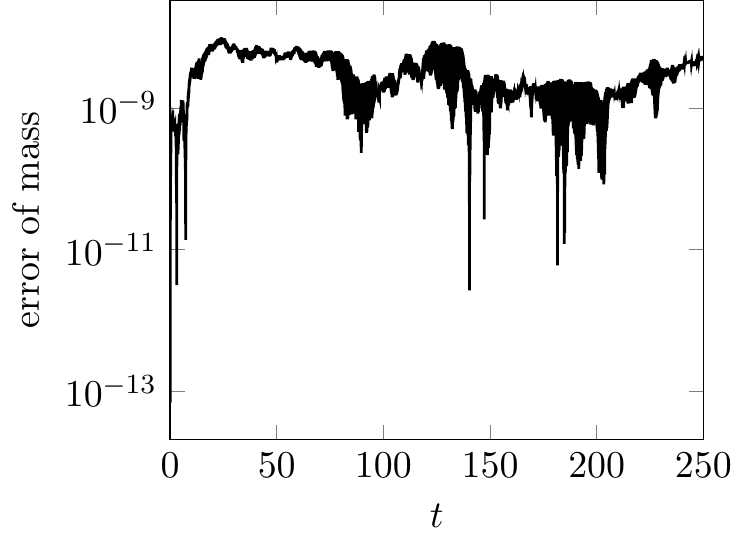}
\includegraphics[scale=1]{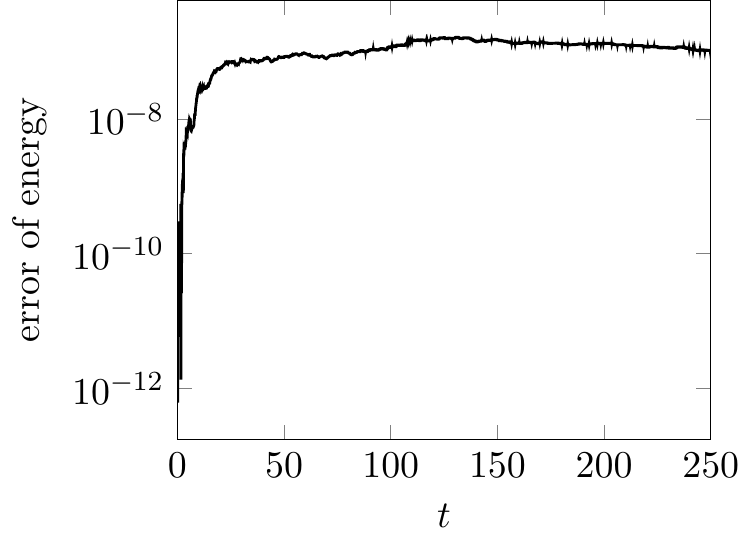}
\caption{Errors of the discrete mass $\calM_\rmd (\bU^{(n)}) $ and energy $\calH_\rmd (\bU^{(n+1)},\bU^{(n)})$ obtained by the linearly implicit scheme~\eqref{scheme1}
for the case $\alpha=1.2$. 
}
\label{fig:inverr_a12}
\end{figure}

\subsection{Preservation of the standard energy}
\label{subsec:energy-pre}

While the discrete mass $\calM_\rmd (\bU^{(n)})$ 
defined on a single time step is preserved,
the discrete energy $\tilde{\calH} _\rmd (\bU^{(n)})$ defined in \eqref{de1} is
not a conserved quantity.
We here investigate to what extent $\tilde{\calH}_\rmd (\bU^{(n)})$ remains close to $\tilde{\calH}_\rmd(\bU^{(0)})$
because
the value $\tilde{\calH}_\rmd (\bU^{(n)})$  could be a good barometer when we consider the long-time stability.
It seems quite challenging to obtain an a priori estimate for the scheme~\eqref{scheme1},
and thus we consider this numerically.
Fig.~\ref{fig:enerr_a16}
shows the results for the case $\alpha = 1.6$.
It is observed that $|\tilde{\calH}_\rmd (\bU^{(n)}) -\tilde{ \calH}_\rmd (\bU^{(0)})| $ is bounded by $10^{-2}$
when $t\leq 300$.
However, when $t$ exceeds $300$, the error becomes large with strong oscillation,
and thus we cannot expect an error bound for $\tilde{\calH}_\rmd (\bU)$ for all $t$.
With other choices of parameters, qualitatively similar behaviour is observed.
These observations indicate that the instability might be caused for a very long-time integration.
This could be a drawback of the proposed linearly implicit scheme
compared with the nonlinear scheme \eqref{scheme3}.
However, we emphasize that since $\tilde{\calH}_\rmd (\bU^{(n)})$
is easily monitored during the time integration, 
we could easily detect a sign of the instability.

\begin{figure}[htbp]
\centering
\includegraphics[scale=1]{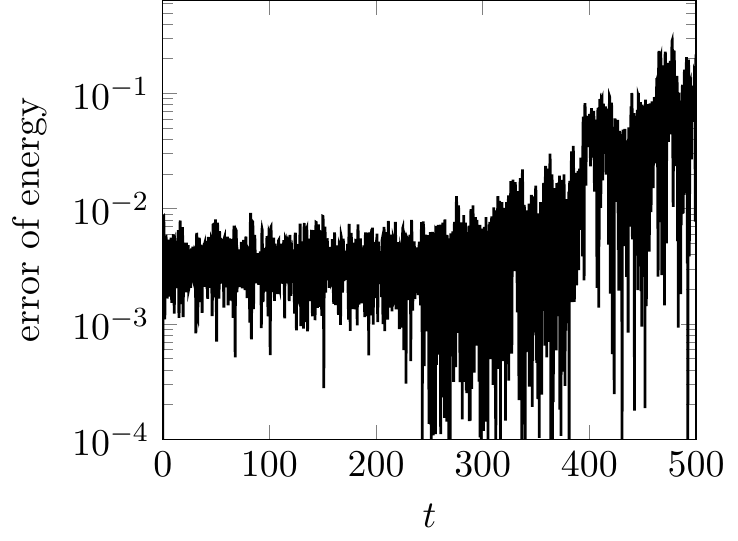}
\caption{Errors of the discrete energy $\tilde{\calH}_\rmd (\bU^{(n)})$ obtained by the linearly implicit scheme \eqref{scheme1}
for the case $\alpha=1.6$.
}
\label{fig:enerr_a16}
\end{figure}

\subsection{Performance of the preconditioning}
\label{subsec:peformpre}

We here discuss the performance of linear solvers
for solving \eqref{scheme2}.

First, we consider solving the original system \eqref{scheme2}
by the COCG method.
Fig.~\ref{fig:numofits1} shows the number of iterations required to the convergence for several settings.
It is observed that more iterations are required for large $\alpha$, $N$, $\Delta t$.
In particular, there is a significant gap when we change $N$ or $\Delta t$.
However, we would also like to emphasize that even in the worst case ($\alpha = 2$, $N = 401$ and $\Delta t = 0.02$),
the result is much better than that by the Bi-CGSTAB method,
which is illustrated in Fig.~\ref{fig:numofits1_BICGSTAB}.
We thus conclude that when we solve the original system \eqref{scheme2} directly, the COCG method seems an appropriate choice. We also note that the COCR method gives 
comparable results to the COCG method.

\begin{figure}[htbp]
\centering
\includegraphics[scale=1]{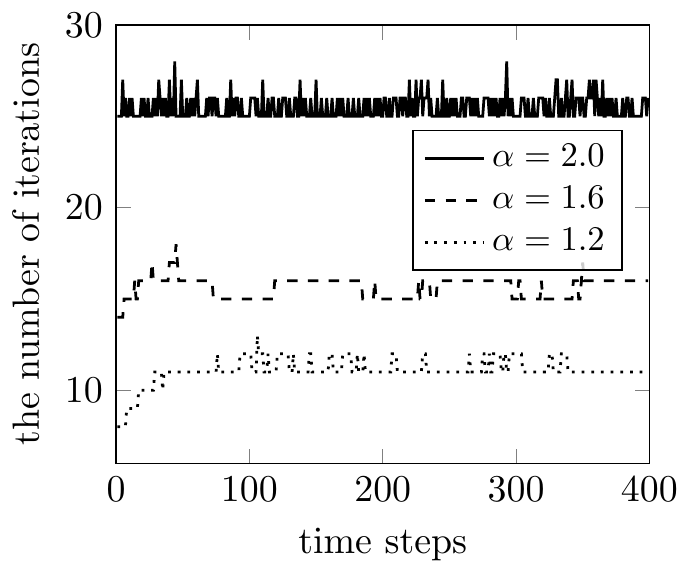}
\includegraphics[scale=1]{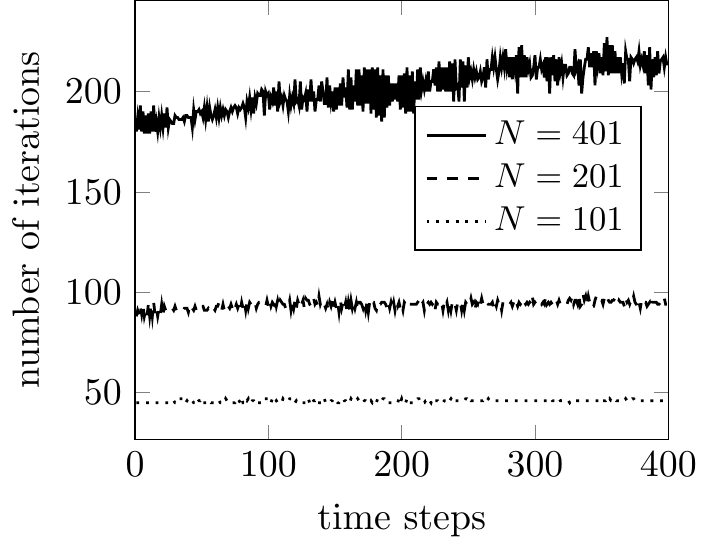}
\includegraphics[scale=1]{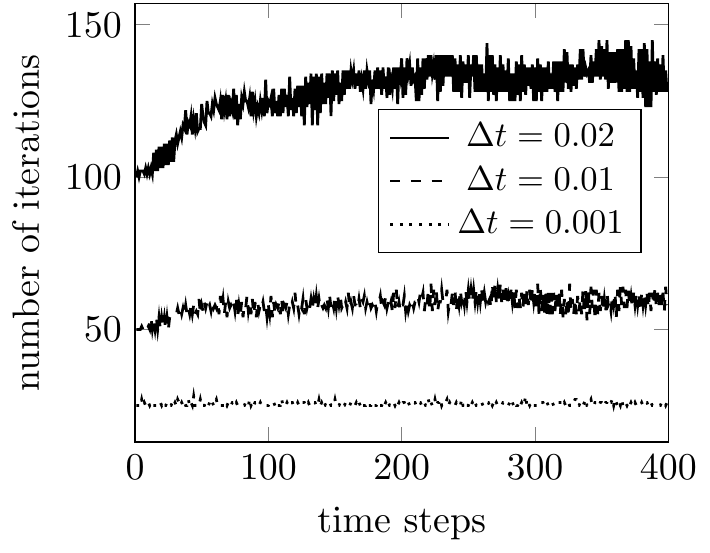}
\caption{The number of iterations for the COCG method to the convergence at each time step. 
(LEFT) $N=101$ and $\Delta t = 0.02$ are fixed, (RIGHT) $\alpha=2$ and $\Delta t = 0.02$ are fixed,
(BOTTOM) $\alpha=2$ and $N=401$ are fixed.
}
\label{fig:numofits1}
\end{figure}

\begin{figure}[htbp]
\centering
\includegraphics[scale=1]{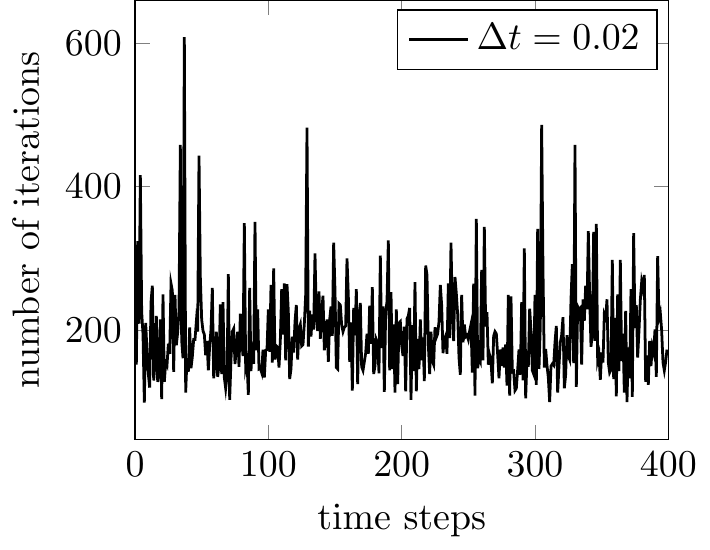}
\caption{The number of iterations for the Bi-CGSTAB method applied to \eqref{scheme2}.
The parameters are set to 
$\alpha=2$, $N=401$ and $\Delta t = 0.02$.
}
\label{fig:numofits1_BICGSTAB}
\end{figure}

Although the COCG method is preferred for solving \eqref{scheme2},
it is hoped that the convergence behaviour is improved.
Thus, we next discuss how the variable transformation and preconditioner proposed in Section~\ref{subsec:pcond} work.
In the following numerical experiments, we consider the case $\alpha=2$.
Table~\ref{table1} shows the maximum, minimum and average number of iterations of the preconditioned Bi-CGSTAB method for several $N$, where $T = 8$ with the time step size $\Delta t=0.02$
and the initial value is set to $u_0 (x) = 2\exp (0.5\rmi x)\sech(\sqrt{2}(x-10))$.
The convergence behaviour is outstandingly improved
for the case $N=401$ compared with Fig.~\ref{fig:numofits1_BICGSTAB},
and furthermore,
the results are notable in that for all cases the preconditioned Bi-CGSTAB method requires only three iterations.
In this problem setting, the CPU time is shown in Table~\ref{table1-a}.
The computation time seems to be almost proportional to $N\log N$ (in the sense that it is a bit worse than $N$, but much better than $N^2$).
Let us change the time step size to $\Delta t = 0.2$.
The results are shown in Table~\ref{table2}.
By comparing Table~\ref{table2} with Table~\ref{table1}, we observe that the convergence behaviour depends on $\Delta t$, but the results still remain outstanding.
Let us also change the initial value. The results are shown in Table~\ref{table3}, which indicate that the dependency on the shape of solutions is subtle.

\begin{table}[htbp]
\caption{The maximum, minimum and average number of iterations of the preconditioned Bi-CGSTAB method:
the time step size is set to $\Delta t = 0.02$, and the initial value $u_0 (x) = 2\exp (0.5\rmi x)\sech(\sqrt{2}(x-10))$.} 
\label{table1}
\centering
\begin{tabular}{cccc}
\hline 
$N$ & $401$ & $1001$ & $4001$ \\ 
\hline 
maximum & $3$ & $3$ & $3$ \\ 
minimum & $3$ & $3$ & $3$ \\ 
average & $3$ & $3$ & $3$ \\ 
\hline 
\end{tabular} 
\end{table}

\begin{table}[htbp]
\caption{Average CPU time of $10$ simulations at $T=8$ (the cost for obtaining $\bU^{(1)}$ by the nonlinear scheme~\eqref{scheme3} is excluded):
the time step size is set to $\Delta t = 0.02$, and the initial value $u_0 (x) = 2\exp (0.5\rmi x)\sech(\sqrt{2}(x-10))$.} 
\label{table1-a}
\centering
\begin{tabular}{ccccc}
\hline 
$N$ & $1001$ & $2001$ & $4001$ & $8001$ \\ 
\hline 
CPU time & $0.794$ & 
$1.524$ & $5.615$ & $8.223$ \\ 
\hline 
\end{tabular} 
\end{table}

\begin{table}[htbp]
\caption{The maximum, minimum and average number of iterations of the preconditioned Bi-CGSTAB method:
the time step size is set to $\Delta t = 0.2$, and the initial value $u_0 (x) = 2\exp (0.5\rmi x)\sech(\sqrt{2}(x-10))$.} 
\label{table2}
\centering
\begin{tabular}{cccc}
\hline 
$N$ & $401$ & $1001$ & $4001$ \\ 
\hline 
maximum & $6$ & $6$ & $6$ \\ 
minimum & $5$ & $5$ & $5$ \\ 
average & $5.020$ & $5.020$ & $5.020$ \\ 
\hline 
\end{tabular} 
\end{table}

\begin{table}[htbp]
\caption{The maximum, minimum and average number of iterations of the preconditioned Bi-CGSTAB method:
the time step size is set to $\Delta t = 0.02$, and the initial value $u_0 (x) = 2\exp (0.5\rmi x)\sech(x-10)$.} 
\label{table3}
\centering
\begin{tabular}{cccc}
\hline 
$N$ & $401$ & $1001$ & $4001$ \\ 
\hline 
maximum & $4$ & $4$ & $4$ \\ 
minimum & $3$ & $3$ & $3$ \\ 
average & $3.286$ & $3.291$ & $3.323$ \\ 
\hline 
\end{tabular} 
\end{table}

As discussed in Section~\ref{subsec:pcond},
it is also of interest to investigate the behaviour when
the COCG method is aggressively applied to the transformed system~\eqref{eq:ls2}
with the preconditioner \eqref{preconditioner},
since the coefficient matrix in~\eqref{eq:ls2}
can be seen as a complex symmetric matrix plus a perturbation.
Fig.~\ref{fig:numofits2} shows the results.
From the left figure, 
it is observed that, when $\Delta t = 0.01$ and $0.02$, 
the preconditioned COCG method actually work and the results are significantly improved
compared with the bottom figure in Fig.~\ref{fig:numofits1}.

Unfortunately, however, if we use a larger time step size $\Delta t = 0.05$, the preconditioned COCG method requires 50 iterations
at the $7$th time step as shown in the right figure of Fig.~\ref{fig:numofits2}, and the iteration does not converge within $1,000$ iterations
at the $8$th time step.
This observation indicates that with the step size $\Delta t = 0.05$ the influence of the perturbation term
is not negligible.

Let us change the initial condition
to $u_0 (x) = 2\exp (0.5\rmi x)\sech(4(x-10))$.
This function has a steeper slope.
The results are displayed in Fig.~\ref{fig:numofits3}.
It is observed that even if a much larger time step size $\Delta t = 0.2$ is employed,
the preconditioned COCG method works fine.
Conversely, a more gradual initial condition $u_0 (x) = 2\exp (0.5\rmi x)\sech(x-10)$
with the time step size $\Delta t = 0.02$
was also considered as our preliminary experiments, and it was observed
that in this case the preconditioned COCG method did not converge at $7$th time step.
These observations indicate that the convergence of the preconditioned COCG method strongly depends on $\Delta t$ and the shape of the solution (in other words, the influence of $D(\bU^{(n)})$). 
They make the effect of the perturbation term in \eqref{eq:ls2}
significant.

For the problem considered in this paper, it is highly recommended to use the Bi-CGSTAB method with the proposed variable transformation and preconditioner, but it is of interest to understand the behaviour of the preconditioned COCG method, which will be investigated in our future work.

\begin{figure}[htbp]
\centering
\includegraphics[scale=1]{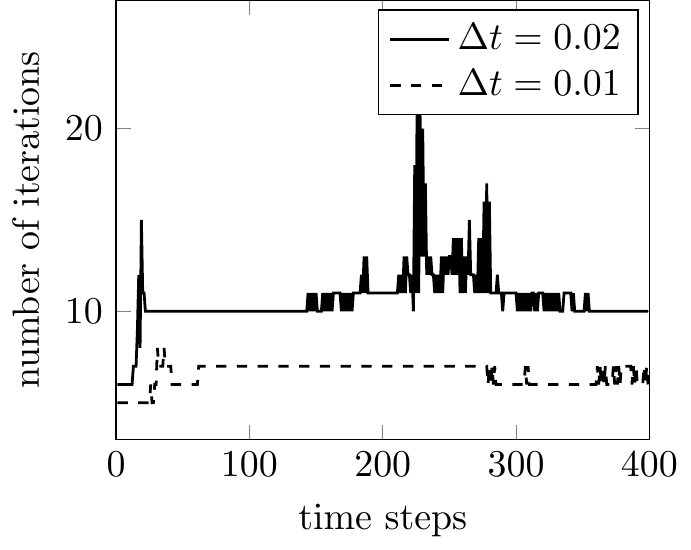}
\includegraphics[scale=1]{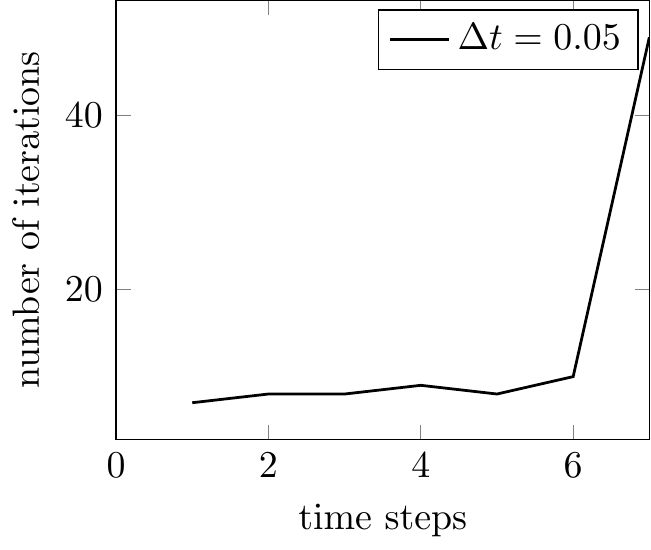}
\caption{The number of iterations for the preconditioned COCG method 
applied to \eqref{eq:ls2} with the matrix $M= I + \rmi \Delta t D_\alpha$.
The initial condition is set to $u_0 (x) = 2\exp (0.5\rmi x)\sech(\sqrt{2}(x-10))$.
The parameters are set to $\alpha=2$ and $N=401$.
(LEFT) $\Delta t = 0.01,0.02$, (RIGHT) $\Delta t  =0.05$.
}

\label{fig:numofits2}
\end{figure}

\begin{figure}[htbp]
\centering
\includegraphics[scale=1]{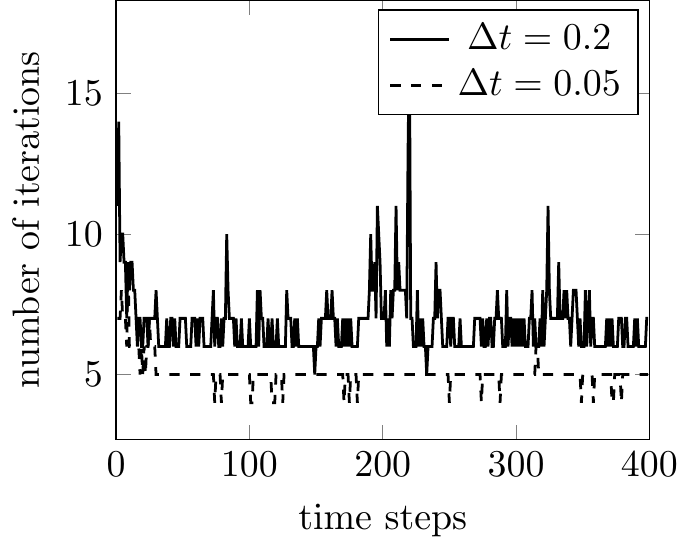}
\caption{The number of iterations for the preconditioned COCG method 
applied to \eqref{eq:ls2} with the matrix $M= I + \rmi \Delta t D_\alpha$.
The initial condition is set to $u_0 (x) = 2\exp (0.5\rmi x)\sech(4(x-10))$.
The parameters are set to $\alpha=2$ and $N=401$.
}
\label{fig:numofits3}
\end{figure}

\section{Concluding remarks}
\label{sec5}

In this paper,
we proposed the linearly implicit scheme~\eqref{scheme1} for the FNLS equation
preserving two invariants: mass  and energy.
The scheme exhibited qualitatively comparable results to the expensive  nonlinear scheme~\eqref{scheme3}.
The preconditioning issues were also discussed:
the preconditioned Bi-CGSTAB method is a preferable choice.

We note several directions for future work.

\begin{itemize}
\item It is hoped that the proposed scheme is used to investigate more challenging problems such as
multi-dimensional problems. When we consider the multi-dimensional problems, the computational complexity
and preconditioning issues become increasingly important, and thus the discussion on the preconditioning considered
for the one-dimensional problem would be helpful.
\item The linearly implicit scheme does not preserve $\tilde{\calH}_\rmd (\bU)$, which is defined on a numerical solution
of a single time step.
The results presented in Fig.~\ref{fig:enerr_a16} should be theoretically investigated in more detail.
Note that structure-preserving linearly implicit schemes preserving a certain quantity have also been
proposed for other partial differential equations (see, e.g.~\cite{do11,mm12,mmf11}),
and similar behaviour might also  have to be reconsidered as well.
\item The discussion in Section~\ref{subsec:peformpre}
indicates that the COCG/COCR method is applicable to
complex but non-symmetric matrices if the non-symmetric term can be regarded as a perturbation in some sense.
This behaviour will be investigated theoretically in more detail.
\end{itemize}


\bibliographystyle{AIMS}
\bibliography{references}
\end{document}